\documentclass[12pt]{article}

\usepackage{latexsym,amssymb,upref,amsmath,amsthm, amsfonts,authblk}
\usepackage{amssymb,amsmath,amsthm, calc, graphicx}
\usepackage{epsfig}
\usepackage{breqn}
\usepackage{footnpag}
\usepackage{rotating}
\usepackage{xcolor}
\usepackage{amsfonts}
\usepackage{setspace}
\usepackage{fullpage}
\usepackage{enumitem}
\usepackage{bbold}
\usepackage{comment}
\usepackage{pgf,tikz}
\usepackage[T1]{fontenc}
\usepackage{mathrsfs}
\usetikzlibrary{arrows}
\usepackage{yhmath}
\usepackage{hyperref}
\usepackage{authblk}
\usepackage{amsmath}
\usepackage{stackengine}

\setstackEOL{\\}
\usepackage{caption}% http://ctan.org/pkg/caption
\captionsetup[ruled]{labelsep=period}
\usepackage{floatrow}
\usepackage{algorithm}
\usepackage{algpseudocode}
\usepackage{xpatch}
\algrenewcommand\alglinenumber[1]{{\sffamily\footnotesize#1}}
\makeatletter
\xpatchcmd{\algorithmic}{\itemsep\z@}{\itemsep=1ex plus2pt}{}{}
\makeatother

\algnewcommand\algorithmicinput{\textbf{Input:}}
\algnewcommand\algorithmicoutput{\textbf{Output:}}
\algnewcommand\Input{\item[\algorithmicinput]}%
\algnewcommand\Output{\item[\algorithmicoutput]}%

\bibliographystyle{plain}

%% THEOREM TYPE
\newtheorem*{thm*}{Theorem*}%[section]
\newtheorem*{prop*}{Proposition}

\newtheorem{thm}{Theorem}%[section]
\newtheorem{prop}{Proposition}
\newtheorem{claim}{Claim}
\newtheorem{lemma}{Lemma}

\newtheorem{question}{Question}

\theoremstyle{definition}
\newtheorem{remark}{Remark}
\newtheorem{observation}{Observation}
\newtheorem{definition}{Definition}

\newcommand{\thistheoremname}{}
\newtheorem*{genericthm*}{\thistheoremname}
\newenvironment{namedthm*}[1]
{\renewcommand{\thistheoremname}{#1}%
	\begin{genericthm*}}
	{\end{genericthm*}}

\DeclareMathOperator{\ex}{ex}
\DeclareMathOperator{\codeg}{codeg}
%%%%%%%%%%%%%%%%%%%%%%%%%%%%%%%%%%%%%%%%%%%

\newcommand{\h}{\mathcal{H}}

\newcommand{\G}{\mathcal{G}}

\newcommand{\abs}[1]{\left\lvert{#1}\right\rvert}

\parskip 5pt

\title{New bounds for a hypergraph Bipartite Tur\'an problem}

\author
{
Beka Ergemlidze
\thanks{ Alfr\'ed R\'enyi Institute of Mathematics, Budapest.
		E-mail: \texttt{beka.ergemlidze@gmail.com}} \qquad
Tao Jiang \thanks{Department of Mathematics, Miami University, Oxford, OH 45056, USA. E-mail: \texttt{jiangt@miamioh.edu}} \qquad
Abhishek Methuku \thanks{Department of Mathematics, \'Ecole Polytechnique F\'ed\'erale de Lausanne, Switzerland. E-mail: \texttt{abhishekmethuku@gmail.com}} 
}

\begin{document}

\maketitle

\begin{abstract}
Let $t$ be an integer such that $t\geq 2$. Let $K_{2,t}^{(3)}$ denote the triple system consisting of 
the $2t$ triples $\{a,x_i,y_i\}$, $\{b,x_i,y_i\}$ for $ 1 \le i \le t$, where the elements $a, b, x_1, x_2, \ldots, x_t,$ $y_1, y_2, \ldots, y_t$ are all distinct. Let $\ex(n,K_{2,t}^{(3)})$ denote
the maximum size of a triple system on $n$ elements that does not contain $K_{2,t}^{(3)}$.
This function was studied by Mubayi and Verstra\"ete \cite{MV}, where the special case $t=2$ was a problem of Erd\H{o}s \cite{Erdos} that was studied by various authors \cite{Furedi, MV, PV}.

Mubayi and Verstra\"ete proved that $\ex(n,K_{2,t}^{(3)})<t^4\binom{n}{2}$ and that
for infinitely many $n$, $\ex(n,K_{2,t}^{(3)})\geq \frac{2t-1}{3} \binom{n}{2}$. These bounds together with a standard argument show that
$g(t):=\lim_{n\to \infty} \ex(n,K_{2,t}^{(3)})/\binom{n}{2}$ exists and that 
\[\frac{2t-1}{3}\leq g(t)\leq t^4.\]
Addressing the question of Mubayi and Verstra\"ete on the growth rate of $g(t)$, we prove that 
as $t \to \infty$,
\[g(t) = \Theta(t^{1+o(1)}).\]
%showing that their lower bound is close to the truth. 
\end{abstract}

\section{Introduction}

An {\it $r$-graph} is an $r$-uniform hypergraph. Let $\mathcal F$ be a family of $r$-graphs and let $\ex(n, \mathcal F)$ denote the maximum number of edges in an $r$-graph on $n$ vertices containing no member of $\mathcal F$. We call $\ex(n,\mathcal F)$ the {\it Tur\'an number} of $\mathcal F$.
Determining the asymptotic order of $\ex(n,\mathcal F)$ is generally very
difficult. For an excellent survey on the study of hypergraph Tur\'an numbers, see \cite{Keevash}.
In this paper, we study a hypergraph Tur\'an problem that is motivated by the study of Tur\'an numbers of complete bipartite graphs as well as by a question of Erd\H{o}s.

\vspace{2mm}

%Given a bipartite graph $G$ with a labelled bipartition $(X,Y)$  the $3$-uniform hypergraph $G^{(3)}%_{X,Y}$ is defined as follows.
\begin{definition}
Let $r\geq 3$ be an integer. 
Let $G$ be a bipartite graph with an ordered bipartition $(X,Y)$.
Suppose that $Y=\{y_1,\dots, y_m\}$. 
Let $Y_1,\dots, Y_m$ be disjoint sets of size $r-2$ that are disjoint from $X\cup Y$.
Let $G_{X,Y}^{(r)}$ denote the $r$-graph with vertex set $(X\cup Y)\cup (\bigcup_{i=1}^m Y_i)$
and edge set $\bigcup_{i=1}^m\{ e\cup Y_i: e\in E(G), y_i\in e\}$. 

Let $s,t\geq 2$ be positive integers. If $G$ is the complete bipartite graph with an ordered
bipartition $(X,Y)$ where $|X|=s, |Y|=t$, then let $G_{X,Y}^{(r)}$ be denoted by $K_{s,t}^{(r)}$.
%with parts $X= \{x_1, x_2, \ldots, x_t\}$ and $Y$ with $|X| \ge |Y|$. Let $X' = \{x'_1, x'_2, \ldots, x'_t\}$ %be a set disjoint from $V(G)$. Then $G^{(3)}$ is a hypergraph with the vertex set $V(G) \cup X'$ and %the edge set $\{x_ix'_iy: x_i y \in E(G)\}$.  
\end{definition}

\begin{definition}
For all $n\geq r\geq 3$, let $f_r(n)$ denote the maximum number of edges in an $n$-vertex
$r$-graph containing no four edges $A,B,C,D$ with $A\cup B=C\cup D$ and 
$A\cap B=C\cap D=\emptyset$.
\end{definition}

Note that $f_3(n)=\ex(n,K_{2,2}^{(3)})$, and in general $f_r(n)\leq \ex(n,K_{2,2}^{(r)})$.
Erd\H{o}s \cite{Erdos} asked whether $f_r(n)=O(n^{r-1})$ when $r\geq 3$. F\"uredi \cite{Furedi}
answered Erd\H{o}s' question affirmatively.
More precisely, he showed that for integers $n,r$ with $r\geq 3$ and $n\geq 2r$,
 \begin{equation}\label{furedi-bounds}
 \binom{n-1}{r-1}+ \left \lfloor \frac{n-1}{r} \right \rfloor \le  f_r(n) < 3.5 \binom{n}{r-1}.
 \end{equation}

The lower bound is obtained by taking 
the family of all $r$-element subsets of $[n] := \{1,2, \ldots, n\}$ containing a
fixed element, say $1$, and adding to the family any collection of $\left \lfloor \frac{n-1}{r} \right \rfloor$ pairwise disjoint $r$-element subsets not containing $1$. For $r=3$, F\"uredi also gave an alternative
lower bound construction using Steiner systems. An {\it $(n,r,t)$-Steiner system} $S(n,r,t)$ is an $r$-uniform hypergraph on $[n]$ in which every $t$-element subset of $[n]$ is contained in exactly one hyperedge.
F\"uredi observed that if we replace every
hyperedge in  $S(n,5,2)$ by all its $3$-element subsets then the resulting triple system 
has $\binom{n}{2}$ triples and contains no copy of $K_{2,2}^{(3)}$. This slightly improves the lower bound in \eqref{furedi-bounds} for $r=3$ to $\binom{n}{2}$, for those $n$ for which $S(n,5,2)$ exists.
The upper bound in \eqref{furedi-bounds} was improved by Mubayi and Verstra\"ete \cite{MV} to 
$3 \binom{n}{r-1} + O(n^{r-2})$. They obtain this bound by first showing $f_3(n) =\ex(n,K_{2,2}^{(3)}) <3\binom{n}{2}+6n$, and then combining it with a simple reduction lemma. This was later improved to  $f_3(n)\leq \frac{13}{9} \binom{n}{2}$ by Pikhurko and Verstra\"ete \cite{PV}.

Motivated by F\"uredi's work, 
Mubayi and Verstra\"ete \cite{MV} initiated the study of the general problem of determining $\ex(n, K_{2,t}^{(r)})$ for any $t\geq 2$. They showed that for any $t \ge 2$ and $n\geq 2t$
$$\ex(n, K_{2,t}^{(3)}) < t^4 \binom{n}{2},$$
and that for infinitely many $n$, $\ex(n, K_{2,t}^{(3)}) \ge \frac{2t-1}{3} \binom{n}{2}$,
where the lower bound is obtained by replacing each hyperedge in  $S(n,2t+1,2)$ 
with all its $3$-element subsets.

Mubayi and Verstra\"ete noted that $g(t):=\lim_{n\to\infty} \ex(n, K_{2,t}^{(3)})/\binom{n}{2}$ exists and raised the question of determining the growth rate of $g(t)$. It follows from their results that 
\begin{equation} \label{MV-bounds}
 \frac{2t-1}{3} \le g(t) \le t^4.
 \end{equation}

In this paper, we prove that as $t \to \infty$,
\begin{equation} \label{our-bounds}
g(t) = \Theta(t^{1+o(1)}),
\end{equation}
showing that their lower bound is close to the truth. More precisely, we prove the following.

\begin{thm}
\label{mainthm}
For any $t \ge 2$, we have
$$\ex(n, K_{2,t}^{(3)}) \le \left( 15 t \log t + 40 t \right) n^2.$$
\end{thm}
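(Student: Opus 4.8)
The plan is to reformulate $K_{2,t}^{(3)}$-freeness in terms of link graphs and matchings, and then to bound $\abs{E(H)}$ by a global double count in which the matching constraint is converted into an edge bound via \emph{fractional} matchings (to avoid the lossy, linear-in-$n$ Erd\H{o}s--Gallai estimate), with the $\log t$ factor emerging from a dyadic decomposition over codegrees. For a vertex $v$ let $G_v$ be its link graph (the graph on $V(H)\setminus\{v\}$ whose edges are the pairs $\{x,y\}$ with $\{v,x,y\}\in H$), and for a pair $\{a,b\}$ let $L_{a,b}:=G_a\cap G_b$ be the common link, whose edges are exactly the pairs $\{x,y\}$ (disjoint from $\{a,b\}$) with $\{a,x,y\},\{b,x,y\}\in H$. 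The first observation is that $H$ contains $K_{2,t}^{(3)}$ if and only if some $L_{a,b}$ has a matching of size $t$; thus $K_{2,t}^{(3)}$-freeness is precisely the condition $\nu(L_{a,b})\le t-1$ for every pair $\{a,b\}$. I would rely on the two identities $\sum_{\{a,b\}}\abs{E(L_{a,b})}=\sum_{p}\binom{\codeg(p)}{2}$ and $\sum_{p}\codeg(p)=3\abs{E(H)}$ (sums over pairs $p$), together with the pointwise relation $\deg_{L_{a,b}}(w)=\codeg_{G_w}(a,b)\le\min\{\codeg(a,w),\codeg(b,w)\}$, which ties degrees in the common link back to codegrees of $H$.

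The central difficulty is that a single $L_{a,b}$ with $\nu\le t-1$ can still have up to $\Theta(tn)$ edges (a clique on $2t-1$ vertices already has $\sim 2t^2$, and Erd\H{o}s--Gallai only forbids more than $\sim tn$), because its edges can concentrate into stars at a few high-codegree vertices; feeding the bound $\abs{E(L_{a,b})}\le tn$ into the identity above yields only an $n^{2.5}$-type estimate. To defeat star-like concentration I would replace the matching number by the fractional matching number: assigning each edge $\{x,y\}$ of $L_{a,b}$ the weight $1/(\deg_{L_{a,b}}x+\deg_{L_{a,b}}y)$ gives a feasible fractional matching, so that
\[
\sum_{\{x,y\}\in E(L_{a,b})}\frac{1}{\deg_{L_{a,b}}x+\deg_{L_{a,b}}y}\ \le\ \nu^{*}(L_{a,b})\ \le\ \tfrac32\,\nu(L_{a,b})\ \le\ \tfrac32\,(t-1).
\]
Crucially, this estimate assigns a star the correct value $O(1)$ rather than its edge count, so it incurs no spurious factor of $n$. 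Summing this inequality over all $\binom{n}{2}$ pairs $\{a,b\}$ and interchanging the order of summation (running instead over pairs $\{x,y\}$ and over $\{a,b\}\subseteq Z(\{x,y\})$, where $Z(p)$ is the set of vertices completing $p$ to an edge of $H$), one obtains a global inequality whose right-hand side is $O(t n^2)$ and whose left-hand side must be lower-bounded in terms of the codegrees $\codeg(\cdot,\cdot)$, using $\deg_{L_{a,b}}(w)\le\min\{\codeg(a,w),\codeg(b,w)\}$ to upper-bound the denominators.

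The remaining step, which I expect to be the main obstacle, is to extract the bound on $\abs{E(H)}$ from this codegree sum. Here I would split the pairs dyadically by codegree, writing $3\abs{E(H)}=\sum_{s\ge 1}N_{\ge s}$ where $N_{\ge s}$ counts pairs of codegree at least $s$, and controlling each scale separately. For the very high scales I would use a clean auxiliary estimate on the ``high-codegree pair graph'' $Q_s$ (with an edge for each pair of codegree $\ge s$): taking a maximum matching $P_1,\dots,P_k$ in $Q_s$, each $Z(P_i)$ has size $\ge s$, so by convexity some pair $\{a,b\}$ lies in $\gtrsim k s^2/n^2$ of the disjoint sets $Z(P_i)$, forcing $\nu(L_{a,b})\gtrsim k s^2/n^2$ and hence $\nu(Q_s)=O(t n^2/s^2)$; this makes the contribution of the large-$s$ scales decay geometrically. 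The low scales $s\lesssim t$ are handled by the fractional-matching inequality above, each contributing $O(tn^2)$, and there are only $\approx\log t$ of them, which is exactly where the factor $\log t$ is paid. Balancing the two regimes and tracking constants should yield $\abs{E(H)}\le (15t\log t+40t)n^2$; the hard part throughout is controlling the heterogeneous codegrees in the global sum, i.e.\ ensuring that the star-like concentrations which defeat Erd\H{o}s--Gallai are charged efficiently to their endpoints' codegrees and summed across scales without losing more than a single logarithmic factor.
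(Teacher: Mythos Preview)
Your approach is entirely different from the paper's. The paper reduces to the $3$-partite setting and then runs an iterative edge-deletion procedure on the link bigraphs, making them $K_{2,q}$-free for a geometrically decreasing sequence $q_0\approx t^2,\,q_1,\ldots,q_k\approx t$ (whence $k\approx\log t$), while bounding the number of hyperedges removed in each round; the $\log t$ is simply the number of rounds. Your fractional-matching inequality and the relations $\nu(L_{a,b})\le t-1$ and $\deg_{L_{a,b}}(w)\le\min\{\codeg(a,w),\codeg(b,w)\}$ are all correct, and the setup is promising.

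The gap is in your ``remaining step.'' The $Q_s$-matching argument does give $\nu(Q_s)=O(tn^2/s^2)$, but what you need for $3\abs{E(H)}=\sum_{s\ge 1}N_{\ge s}$ is a bound on $N_{\ge s}=\abs{E(Q_s)}$. The only general conversion is $\abs{E(Q_s)}\le 2\nu(Q_s)(n-1)$ (every edge meets a maximal matching), which yields $N_{\ge s}=O(tn^3/s^2)$; summed over dyadic $s>t$ this is $O(n^3)$, not $O(tn^2)$, so your high-scale tail is off by a factor $n/t$. On the low side, you assert that the fractional-matching inequality makes each of the $\approx\log t$ scales below $t$ contribute $O(tn^2)$, but you never show how the global bound $\sum_{\{a,b\}}\nu^*(L_{a,b})\le\frac{3}{2}(t-1)\binom{n}{2}$ controls any individual $N_{\ge s}$; in fact the trivial estimate $N_{\ge s}\le\binom{n}{2}$ already gives $\sum_{s\le t}N_{\ge s}\le t\binom{n}{2}$ with no logarithm, so as written the fractional matching does no work in your outline. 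Thus neither regime of the two-scale plan actually delivers its promised bound. (Your setup can, however, be pushed through without any dyadic splitting: after swapping sums and bounding each denominator by $\codeg(a,x)+\codeg(a,y)$, the left side reorganizes edge-by-edge as $\frac{1}{2}\sum_{e=\{u,v,w\}}\big[\frac{c_{vw}-1}{c_{uv}+c_{uw}}+\frac{c_{uw}-1}{c_{uv}+c_{vw}}+\frac{c_{uv}-1}{c_{uw}+c_{vw}}\big]$, and Nesbitt's inequality bounds each bracket below by a positive constant whenever all three pair-codegrees exceed $1$; this bypasses $Q_s$ entirely and, if you check the details, even eliminates the $\log t$.)
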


\vspace{2mm}

\textbf{Notation.} Given a hypergraph (or a graph) $H$, throughout the paper, we also denote the set of its edges by $H$. For example $\abs{H}$ denotes the number of edges of $H$. Given two vertices $x,y$ in a graph $H$, let $N_H(x,y)$ denote the common neighborhood of $x$ and $y$ in $H$. We drop the subscript $H$ when the context is clear.

\vspace{2mm}

\section{Proof of Theorem \ref{mainthm}: $K_{2,t}^{(3)}$-free hypergraphs}
\label{mainthmsection}

We will use the a special case of a well-known result of Erd\H{o}s and Kleitman \cite{EK}.

\begin{lemma} 
\label{ErdosKleitman}
Let $H$ be a $3$-graph on $3n$ vertices. Then $H$ contains a $3$-partite $3$-graph, with all parts of size $n$, and with at least $\frac{2}{9} \abs{H}$ hyperedges.
\end{lemma}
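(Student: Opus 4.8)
The plan is to use a first-moment argument over a uniformly random balanced partition of the vertex set. Let $V = V(H)$, so $|V| = 3n$, and choose an ordered partition $V = V_1 \cup V_2 \cup V_3$ uniformly at random among all partitions into three labeled parts of size exactly $n$. Call a hyperedge of $H$ \emph{transversal} if it has exactly one vertex in each $V_i$; the transversal edges then form a $3$-partite $3$-graph with parts $V_1, V_2, V_3$, each of size $n$. It therefore suffices to show that in expectation at least $\frac{2}{9}\abs{H}$ edges are transversal.

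First I would compute, for a fixed hyperedge $e = \{a,b,c\}$, the probability that $e$ is transversal, placing its vertices one at a time. The vertex $a$ lands in some part; conditioned on this, $b$ avoids $a$'s part with probability $\frac{2n}{3n-1}$, since there are $2n$ admissible slots among the $3n-1$ remaining positions; and conditioned on $a,b$ lying in two distinct parts, $c$ must land in the unique remaining part, which happens with probability $\frac{n}{3n-2}$. Hence
\[ \Pr[e \text{ transversal}] = \frac{2n}{3n-1} \cdot \frac{n}{3n-2} = \frac{2n^2}{(3n-1)(3n-2)}. \]

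The key point is that this probability is at least $\frac{2}{9}$ for \emph{every} $n \ge 1$, not merely in the limit. Indeed, the inequality $\frac{2n^2}{(3n-1)(3n-2)} \ge \frac{2}{9}$ rearranges to $9n^2 \ge (3n-1)(3n-2) = 9n^2 - 9n + 2$, i.e. $9n \ge 2$, which always holds. By linearity of expectation, the expected number of transversal edges is therefore at least $\frac{2}{9}\abs{H}$, so some partition achieves at least this many, and the transversal edges of that partition furnish the desired $3$-partite sub-$3$-graph with all parts of size $n$.

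The argument is almost entirely routine, so there is no substantial obstacle; the one step that genuinely needs care is verifying the exact (rather than asymptotic) lower bound on the transversal probability, since it is this finite-$n$ estimate that makes the clean constant $\frac{2}{9}$ attainable for all $n$ rather than only as $n \to \infty$.
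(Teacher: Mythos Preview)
Your proof is correct; this is the standard first-moment argument for the Erd\H{o}s--Kleitman lemma, and the verification that $\frac{2n^2}{(3n-1)(3n-2)} \ge \frac{2}{9}$ for every $n \ge 1$ is exactly the step needed to get the clean constant. The paper does not actually prove this lemma but merely cites it as a special case of a well-known result of Erd\H{o}s and Kleitman~\cite{EK}, so your argument supplies precisely the self-contained justification the paper omits.
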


Let us define the sets $A = \{a_1, a_2, \ldots, a_n \}$, $B = \{b_1, b_2, \ldots, b_n \}$ and $C = \{c_1, c_2, \ldots, c_n \}$. Throughout the proof we define various $3$-partite $3$-graphs whose parts are $A, B$ and $C$.

\vspace{2mm}

Suppose $H$ is a $K_{2,t}^{(3)}$-free $3$-partite $3$-graph on $3n$ vertices with parts $A, B$ and $C$. First let us show that it suffices to prove the following inequality. 
\begin{equation}
\label{eq:reductionto3-partite}
\abs{H} \le (30 t \log t + 80 t) n^2.
\end{equation}
It is easy to see that inequuity \eqref{eq:reductionto3-partite} and Lemma \ref{ErdosKleitman} together imply that any $K_{2,t}^{(3)}$-free $3$-graph on $3n$ vertices contains at most $\frac{9}{2} (30 t \log t + 80 t) n^2$ hyperedges, from which Theorem \ref{mainthm} would follow after replacing $3n$ by $n$. 

In the remainder of the section, we will prove \eqref{eq:reductionto3-partite}. Let us introduce the following notion of sparsity.

\begin{definition}[$q$-sparse and $q$-dense pairs]
Let $q$ be a positive integer. Let $G$ be a bipartite graph with parts $X, Y$. Let $x,y$ be two different vertices such that $x,y\in X$ or $x,y\in Y$. 
Then we call $\{x,y\}$ a \emph{$q$-dense} pair of $G$ if $\abs{N(x,y)} \ge q$. We call $\{x,y\}$ a $q$-\emph{sparse} pair of $G$ if $\abs{N(x,y)} < q$ but $x,y$ are still contained in a copy of $K_{2,q}$ in $G$. Note that it is possible that $\{x,y\}$ is neither $q$-sparse nor $q$-dense.
\end{definition}

The following Procedure $\mathcal P(q)$ about making a bipartite graph $K_{2,q}$-free lies at the heart of the proof. (We think of $q$ as the parameter of the Procedure $\mathcal P(q)$, that is changed throughout the proof.)
 
			\begin{algorithm}[H]
			\caption*{\textbf{Procedure $\mathcal P(q)$:} Making a graph $K_{2,q}$-free}
			\begin{algorithmic}
\smallskip			
	\Input{A bipartite graph $G$ with parts $A$ and $B$.}
	\State $\mathcal G \leftarrow G$, $\psi \leftarrow 1$.
	
	\State $F(x,y) \leftarrow \emptyset$ , $D(x,y) \leftarrow \emptyset$ and $S(x,y) \leftarrow \emptyset$ for every $x,y \in A$ and $x, y \in B$. 
	
%	\State \emph{\textbf{Redefining Step:}}

%	\vspace{2mm}
	
	\While{$\psi = 1$}
	   \State $\psi \leftarrow 0$.
       \State \emph{\textbf{Step 1:}}
       
	  \State \Longunderstack[l]{For each $q$-sparse pair $\{x,y\}$ of $
	  \mathcal G$ such that $F(x,y) = \emptyset$, let $S(x,y)$ be the set of \\vertices spanned  by the $q$-dense pairs of $\mathcal G$ that are contained in $N_{\mathcal G}(x,y)$. \\ Let $F(x,y) \leftarrow \{ab \in \mathcal G \mid a \in \{x, y\} \text{ and } b \in S(x,y) \},$ and let $D(x,y)$ be a spanning \\forest of the graph formed by the dense pairs of $\mathcal G$ that are  contained in $S(x,y)$.}
      \State {\Longunderstack[l]{\textbf{If} there exists an edge $ab \in \mathcal G$ such that $ab$ is contained in $F(x,y)$ for at least $q/2$ \\different pairs $\{x,y\}$, where $x,y\in A$ or $x,y\in B$,}}
      \State \textbf{then} $\mathcal G \leftarrow \mathcal G \setminus \{ab\}$ and $\psi \leftarrow 1$.
      
  %    \EndIf
      \State \emph{\textbf{Step 2:}}
     % \State $\mathcal G \leftarrow \mathcal G'$
      \State  \Longunderstack[l]{\textbf{If} there exists a set $M$ of edges in $\mathcal G$ such that removing all of the edges of $M$ from $\mathcal G$ \\ decreases the number of $q$-dense pairs by at least $\abs{M}/2$,}
      \State \textbf{then} $\mathcal G \leftarrow \mathcal G \setminus M$ and $\psi \leftarrow 1$.
  %  \EndIf
    \EndWhile
  \State  $G' \leftarrow \mathcal G$
  \State $F'(x,y) \leftarrow F(x,y)$ for every  $x,y \in A$ and $x, y \in B$.
  \State $D'(x,y) \leftarrow D(x,y)$ for every  $x,y \in A$ and $x, y \in B$.
  \State $S'(x,y) \leftarrow S(x,y)$ for every  $x,y \in A$ and $x, y \in B$.
 \Output{The graph $G'$ and the sets $F'(x,y), D'(x,y), S'(x,y)$ for all $x,y \in A$ and $x, y \in B$.}
  \end{algorithmic}
\end{algorithm}

In the procedure $\mathcal P(q)$, initially for all the pairs $\{x,y\}$ (with $x,y \in A$ and $x,y \in B$) the sets $F(x,y), D(x,y), S(x,y)$ are set to be empty. Then as the edges are being deleted during the procedure, possibly, new $q$-sparse pairs $\{x,y\}$ are being created. When this happens, Step 1 redefines the sets $S(x,y), F(x,y), D(x,y)$ and gives them some non-empty values. (They get non-empty values due to the fact that $\{x,y\}$ is $q$-sparse, which implies that $\{x,y\}$ is contained in a copy of $K_{2,q}$, so there is at least one $q$-dense pair in the common neighborhood of $x, y$.) Therefore, these values stay unchanged throughout the rest of the procedure.
    
Notice that at the point $S(x,y)$ was redefined, the pair $\{x,y\}$ was $q$-sparse, so number of common neighbors is less than $q$. Therefore, as $S(x,y)$ is a subset of the common neighborhood of $x$ and $y$, we also have $\abs{S(x,y)} < q$. Moreover, since $D(x,y)$ is defined as a spanning forest with the vertex set $S(x,y)$, we have $\abs{D(x,y)} \le \abs{S(x,y)}$. Also, it easily follows from the definition of $F(x,y)$ that  $\abs{F(x,y)} = 2 \abs{S(x,y)}$. Finally, notice that $D(x,y)$ does not contain any isolated vertices, because its vertex set $S(x,y)$ spans all of its edges, by definition. Therefore, $\abs{D(x,y)} \ge \abs{S(x,y)}/2$. At the end of the procedure, the sets $F(x,y), D(x,y), S(x,y)$ are renamed as $F'(x,y), D'(x,y), S'(x,y)$. Note also that if a pair $\{x,y\}$ never becomes $q$-sparse
in the process then $S'(x,y)=D'(x,y)=F'(x,y)=\emptyset$.
%Therefore, the following properties hold for all pairs $x,y\in A$ and $x,y\in B$ . 
%We collect them below for future reference. 

\begin{observation}
\label{basicpropertiesofProcedure}
For every $x,y \in A$ and $x,y \in B$, we have
\begin{enumerate}
    \item[(1)] $\abs{S'(x,y)} < q$.
    \item[(2)] $\abs{D'(x,y)} \le \abs{S'(x,y)}$.
    \item[(3)] $\abs{F'(x,y)} = 2 \abs{S'(x,y)}$.
    \item[(4)] $\abs{D'(x,y)} \ge \abs{S'(x,y)}/2$.
\end{enumerate}
\end{observation}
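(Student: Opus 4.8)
The plan is to recognize that all four assertions are statements about the values that $S(x,y), D(x,y), F(x,y)$ receive at the single iteration of Step 1 in which they are first assigned nonempty values; by the discussion preceding the observation, Step 1 only acts on pairs with $F(x,y)=\emptyset$, so once assigned these sets are frozen, and the primed outputs $S'(x,y), D'(x,y), F'(x,y)$ equal exactly these frozen values. Thus I would first dispose of the trivial case: if $\{x,y\}$ never becomes $q$-sparse during the run of $\mathcal P(q)$, then $S'(x,y)=D'(x,y)=F'(x,y)=\emptyset$ and all four (in)equalities hold vacuously. Otherwise I fix the iteration of Step 1 at which the assignment happens and argue about the graph $\mathcal G$ at that moment, when $\{x,y\}$ is $q$-sparse.

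For parts (1) and (3) I would use that at this moment $S(x,y)$ is, by its definition, a set of vertices contained in the common neighborhood $N_{\mathcal G}(x,y)$. Since $\{x,y\}$ is $q$-sparse we have $\abs{N_{\mathcal G}(x,y)}<q$, and hence $\abs{S(x,y)}\le \abs{N_{\mathcal G}(x,y)}<q$, giving (1). For (3), recall $F(x,y)=\{ab\in\mathcal G \mid a\in\{x,y\},\ b\in S(x,y)\}$; because $S(x,y)\subseteq N_{\mathcal G}(x,y)$, every $b\in S(x,y)$ is joined in $\mathcal G$ to both $x$ and $y$, so each such $b$ contributes exactly the two distinct edges $xb,yb$ to $F(x,y)$, whence $\abs{F(x,y)}=2\abs{S(x,y)}$.

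Parts (2) and (4) are then just two elementary facts about the forest $D(x,y)$, which is a spanning forest on the vertex set $S(x,y)$. Any forest on $k$ vertices has at most $k-1<k$ edges, which gives (2). For (4) I would observe that $D(x,y)$ has no isolated vertices, since its vertex set $S(x,y)$ is precisely the set of vertices incident to its edges; summing degrees then yields $2\abs{D(x,y)}=\sum_{v\in S(x,y)}\deg(v)\ge \abs{S(x,y)}$, so $\abs{D(x,y)}\ge \abs{S(x,y)}/2$. I do not expect a genuine obstacle here: the only point needing care is the bookkeeping justification that the primed quantities coincide with their values at the moment of assignment, i.e.\ that these sets are never overwritten later, and this is exactly what the ``$F(x,y)=\emptyset$'' guard in Step 1 guarantees.
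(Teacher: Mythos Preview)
Your proposal is correct and follows essentially the same approach as the paper: the paper's justification, given in the two paragraphs immediately preceding the observation, makes the same freezing argument (Step~1 only acts when $F(x,y)=\emptyset$), handles the never-$q$-sparse case the same way, and derives (1)--(4) from exactly the same facts about $S(x,y)\subseteq N_{\mathcal G}(x,y)$, the spanning forest $D(x,y)$ having no isolated vertices, and the explicit description of $F(x,y)$. Your write-up is slightly more detailed (e.g.\ the degree-sum justification of (4)), but there is no substantive difference.
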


For convenience, throughout the paper we (informally) say that the sets $F'(x,y)$, $D'(x,y)$, $S'(x,y)$ are defined by applying Procedure $\mathcal P(q)$ to a graph $G$ to obtain the graph $G'$, instead of saying that the input to Procedure $\mathcal P(q)$ is $G$ and the output is the graph $G'$ and the sets $F'(x,y)$, $D'(x,y)$, $S'(x,y)$.

%Observe that whenever an edge is deleted from $\mathcal G$ in the above procedure, some $q$-dense pairs may become $q$-sparse (but the converse is clearly not true). For this reason we re-define $F(x,y)$ in the Step 1 for the newly created $q$-sparse pairs $\{x,y\}$.

\begin{claim}
\label{NowhereObservation}
Let the sets $F'(x,y), D'(x,y), S'(x,y)$ (for $x, y \in A$ and $x, y \in B$) be defined by applying Procedure $\mathcal P(q)$ to a bipartite graph $G$ to obtain $G'$. Let $N(x,y)$ denote the number of common neighbors of vertices $x,y$ in the graph $G$. Then 
$$\frac{\abs{F'(x,y)}}{4} \le \abs{D'(x,y)} < q.$$
Moreover $\abs{F'(x,y)}\leq2\abs{N(x,y)}$.
\end{claim}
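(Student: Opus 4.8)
The plan is to derive both assertions directly from Observation \ref{basicpropertiesofProcedure}, together with a single structural fact about how $S'(x,y)$ sits inside the input graph $G$. First I would dispose of the trivial case: if the pair $\{x,y\}$ never becomes $q$-sparse during the run of $\mathcal P(q)$, then by construction $S'(x,y)=D'(x,y)=F'(x,y)=\emptyset$, and all three asserted inequalities hold trivially (reading them as $0\le 0<q$ and $0\le 2\abs{N(x,y)}$). So I may assume that $\{x,y\}$ does become $q$-sparse at some step, which is exactly the hypothesis under which the four estimates of Observation \ref{basicpropertiesofProcedure} are valid.

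For the displayed inequality, I would simply chain the parts of Observation \ref{basicpropertiesofProcedure}. The lower bound on $\abs{D'(x,y)}$ comes from combining (3) and (4): since $\abs{F'(x,y)}=2\abs{S'(x,y)}$ and $\abs{D'(x,y)}\ge\abs{S'(x,y)}/2$, we get $\abs{F'(x,y)}=2\abs{S'(x,y)}\le 4\abs{D'(x,y)}$, which is precisely $\abs{F'(x,y)}/4\le\abs{D'(x,y)}$. The upper bound comes from combining (2) and (1): $\abs{D'(x,y)}\le\abs{S'(x,y)}<q$. Together these give $\frac{\abs{F'(x,y)}}{4}\le\abs{D'(x,y)}<q$.

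For the ``moreover'' statement I need the one extra observation that $S'(x,y)\subseteq N_G(x,y)$. At the moment Step 1 defines $S(x,y)$, this set consists of the vertices spanned by the $q$-dense pairs of the current graph $\mathcal G$ that lie inside $N_{\mathcal G}(x,y)$; hence $S(x,y)\subseteq N_{\mathcal G}(x,y)$. Since $\mathcal P(q)$ only ever deletes edges, $\mathcal G$ is always a subgraph of the input $G$, so $N_{\mathcal G}(x,y)\subseteq N_G(x,y)$, and as these values are frozen once assigned, $S'(x,y)\subseteq N_G(x,y)$. Therefore $\abs{S'(x,y)}\le\abs{N(x,y)}$, and combining this with part (3) yields $\abs{F'(x,y)}=2\abs{S'(x,y)}\le 2\abs{N(x,y)}$, as required.

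The argument is essentially bookkeeping, since the substantive content was already packaged into Observation \ref{basicpropertiesofProcedure}. The only genuine point — the single place where one must appeal to the dynamics of the procedure rather than to the static estimates — is the monotonicity $N_{\mathcal G}(x,y)\subseteq N_G(x,y)$ under edge deletions, which underlies the final bound; I do not anticipate any real obstacle here.
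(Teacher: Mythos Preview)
Your proposal is correct and follows essentially the same approach as the paper: both chain parts (3)+(4) and (1)+(2) of Observation \ref{basicpropertiesofProcedure} for the displayed inequality, and both obtain the ``moreover'' bound by noting that $S'(x,y)$ lies in the common neighborhood of $x,y$ in a subgraph of $G$ and then invoking part (3). Your treatment is slightly more explicit (you spell out the trivial case and the edge-deletion monotonicity), but there is no genuine difference in strategy.
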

\begin{proof}
Combining the parts (3) and (4) of Observation \ref{basicpropertiesofProcedure}, we have $\abs{F'(x,y)}/4 \le \abs{D'(x,y)}$.
Combining the parts (1) and (2) of Observation \ref{basicpropertiesofProcedure}, we obtain $\abs{D'(x,y)} < q$, proving the first part of the claim.

To prove the second part, notice that $S'(x,y)$ is a common neighborhood of $x,y$ in some subgraph $\G$ of $G$, we have $\abs{S'(x,y)}\le \abs{N(x,y)}$. Combining this with part (3) of Observation \ref{basicpropertiesofProcedure}, we obtain $\abs{F'(x,y)}\leq2\abs{N(x,y)}$, as required.
\end{proof}

Finally, let us note the following properties of the graph obtained after applying the procedure.

\begin{observation}
\label{PropertiesOfG'}
Let the sets $F'(x,y), D'(x,y), S'(x,y)$ (for $x, y \in A$ and $x, y \in B$) be defined by applying Procedure $\mathcal P(q)$ to a bipartite graph $G$ to obtain $G'$. Then
\begin{enumerate}
    \item 
    %Suppose $G'$ is obtained by applying the procedure $\mathcal P(q)$ to the graph $G$ with parts $A$ and $B$.
    Every edge $ab$ in $G'$ is contained in at most $q/2$ members of  $\{F'(x,y) : x, y \in A\}$ and in at most $q/2$ members of $\{F'(x,y) : x, y \in B\}$. 
    
    \item For any set $M$ of edges in $G'$, removing the edges of $M$ from $G'$ decreases the number of $q$-dense pairs by less than $\abs{M}/2$.
 
\end{enumerate}

\end{observation}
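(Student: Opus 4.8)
The plan is to read both statements directly off the stopping condition of the while loop in Procedure $\mathcal P(q)$. First I would record that the procedure terminates: in every iteration in which the loop variable $\psi$ is reset to $1$, at least one edge is deleted from the finite graph $\mathcal G$ (either the single edge $ab$ in Step 1 or the set $M$ in Step 2), so after finitely many iterations we reach an iteration in which $\psi$ remains $0$. This is exactly the iteration after which the loop exits and we set $G' \leftarrow \mathcal G$, $F'(x,y)\leftarrow F(x,y)$, and so on. The crucial point is that $\psi=0$ at the end of this final iteration means that \emph{neither} the deletion in Step 1 \emph{nor} the deletion in Step 2 was triggered, and that by this moment all the sets $F(x,y),D(x,y),S(x,y)$ have already attained their terminal values $F'(x,y),D'(x,y),S'(x,y)$. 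The latter holds because, as explained before the observation, each such set is assigned at most once, at the instant $\{x,y\}$ first becomes $q$-sparse, and is frozen thereafter. Throughout, $G'=\mathcal G$ is this terminal graph.

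For part (1) I would invoke the failure of the Step 1 deletion in the final iteration. Within an iteration, Step 1 first (re)defines $S(x,y),F(x,y),D(x,y)$ for \emph{every} currently $q$-sparse pair whose $F$-set is still empty, and only afterwards tests the deletion condition; hence at the final test the condition is evaluated against precisely the terminal family $\{F'(x,y)\}$. Since no edge was deleted, there is no edge $ab\in\mathcal G=G'$ contained in at least $q/2$ of these sets. Consequently every edge of $G'$ lies in fewer than $q/2$ (hence at most $q/2$) of the sets $F'(x,y)$ with $x,y\in A$, and likewise at most $q/2$ of those with $x,y\in B$, which is exactly the claim.

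For part (2) I would use the failure of the Step 2 deletion in the same final iteration. By definition, Step 2 would have deleted some $M$ and set $\psi\leftarrow 1$ if there existed a set $M$ of edges of $\mathcal G$ whose removal decreased the number of $q$-dense pairs by at least $\abs{M}/2$. Since $\psi$ stayed $0$, no such $M$ exists in $\mathcal G=G'$; equivalently, for every set $M$ of edges of $G'$, removing $M$ decreases the number of $q$-dense pairs by strictly less than $\abs{M}/2$, which is precisely part (2).

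The only genuine subtlety, and the step I would be most careful about, is the bookkeeping of \emph{when} the sets $F(x,y)$ are frozen relative to when the Step 1 condition is evaluated: one must verify that no pair can turn $q$-sparse after the final condition check and thereby slip out of the count. This is exactly what the order of operations inside an iteration guarantees, so at termination the family being tested is genuinely $\{F'(x,y)\}$. I would also note that $F'(x,y)$ may retain edges that were deleted at a later stage and so are absent from $G'$; this causes no difficulty, since part (1) only constrains edges $ab$ that survive into $G'$, and these are precisely the edges $ab\in\mathcal G$ over which the Step 1 condition ranges.
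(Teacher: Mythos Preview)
Your proposal is correct and is exactly the intended argument: the paper states this as an Observation without proof precisely because both items are the negations of the Step~1 and Step~2 triggers at the moment the while loop exits, which is what you spell out. Your extra care about the order of operations (that all $F(x,y)$ are frozen before the final Step~1 test, and that surviving edges of $G'$ are exactly what that test ranges over) is appropriate and correct, and in fact gives the slightly stronger ``fewer than $q/2$'' in part~(1); the paper simply leaves all of this implicit.
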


\begin{definition}
Let $H$ be a $3$-partite $3$-graph with  parts $A, B$ and $C$. 

For each $1 \le i \le n$, let $G_i[H](A,B)$ be the bipartite graph with parts $A$ and $B$, whose edge set is $\{ab \mid a \in A, b \in B,  abc_i \in E(H) \}$. The graphs $G_i[H](B, C)$ and $G_i[H](A, C)$ are defined similarly.
\end{definition}

\begin{definition}[Applying Procedure $\mathcal{P}(q)$ to a hypergraph]
\label{ApplyingP(q)toHypergraph}
Let $H$ be a $3$-partite $3$-graph with parts $A, B$ and $C$. We define the hypergraph $H'$ as follows:

For each $1 \le i \le n$, let $G'_i[H](A,B)$, $G'_i[H](B, C)$, $G'_i[H](A,C)$ be the graphs obtained by applying the procedure $\mathcal {P}(q)$ to the graphs $G_i[H](A,B)$, $G_i[H](B, C)$, $G_i[H](A, C)$ respectively. 

For each edge $ab$ which was removed from $G_i[H](A,B)$ by the procedure $\mathcal{P}(q)$ (i.e. $ab\in G_i[H](A,B)\setminus G'_i[H](A,B)$) we remove the hyperedge $abc_i$ from $\h$ (it may have been removed already). 
Similarly for each edge $bc$ (resp. $ac$) which was removed from $G_i[H](B,C)$ (resp. $G_i[H](A,C)$) by the procedure $\mathcal{P}(q)$ we remove the hyperedge $a_ibc$ (resp. $ab_ic$) from $\h$. Let the resulting hypergraph be $H'$. More precisely,
$$H' =\{a_ib_jc_k \in H \mid a_ib_j\in G'_k[H](A,B)\text{, } b_jc_k\in G'_i[H](B,C) \text{, }  a_ic_k\in G'_j[H](A,C)\}.$$

We say $H'$ is obtained from $H$ by applying the Procedure $\mathcal P(q)$.
%Suppose we applied the procedure $\mathcal {P}(q)$ to delete edges from the graphs 
%$G_i(A,B)$, $G_i(B, C), G_i(A,C)$ (for $1 \le i \le n$ to obtain the graphs $G'_i(A,B)$, $G'_i(B, C)$, $G'_i(A,C)$ (for $1 \le i \le n$). 
\end{definition}

\begin{remark}
\label{AB,BC,CAreducedtoAB}
Let $H'$ be obtained by applying the Procedure $\mathcal P(q)$ to the hypergraph $H$.
%Suppose we applied the procedure $\mathcal {P}(q)$ to delete edges from the graphs 
%$G_i(A,B)$, $G_i(B, C), G_i(A,C)$ (for $1 \le i \le n$ to obtain the graphs $G'_i(A,B)$, $G'_i(B, C)$, $G'_i(A,C)$ (for $1 \le i \le n$). 
%Recall that deleting edges from $G_i(A,B)$ or $G_i(B, C)$ or $G_i(A,C)$ corresponds to deleting hyperedges from $H$. (It is possible that the same hyperedge is deleted more than once.) Let the resulting hypergraph be $H'$. Then, 
Then,
\begin{multline*}
\abs{H} - \abs{H'}  \le \sum_{1 \le i \le n} \left ( \abs{G_i[H](A,B)} -\abs{G'_i[H](A,B)} \right ) + \sum_{1 \le i \le n} \left ( \abs{G_i[H](B,C)}-\abs{G'_i[H](B,C)} \right )\\ + \sum_{1 \le i \le n} \left ( \abs{G_i[H](A,C)} - \abs{G'_i[H](A,C)} \right ).
\end{multline*}
Indeed, if $a_ib_jc_k\in H \setminus H'$ then it is easy to see that $a_ib_j\in G_k[H](A,B)\setminus G'_k[H](A,B)$ or $b_jc_k\in G_i[H](B,C)\setminus G'_i[H](B,C)$ or $a_ic_k\in G_j[H](A,C)\setminus G'_j[H](A,C)$.
%Without loss of generality, we can assume $$\abs{G_i(A,B)} -\abs{G'_i(A,B)}\ge \abs{G_i(B,C)}-\abs{G'_i(B,C)}\ge \abs{(G_i(A,C)} - \abs{G'_i(A,C)}.$$ Then 
%\begin{equation}
%\label{connectingHtoGi}
 %   \abs{H} - \abs{H'} \le 3 \abs({G_i(A,B)} -\abs{G'_i(A,B)}).
%\end{equation}
%Throughout the paper we mostly work with the graphs $G_i(A,B)$ (for $1 \le i \le n$), and sometimes denote it as $G_i$, for short.
\end{remark}

\begin{lemma}
    \label{steps}
    Let $q\geq 2$ be an even integer and $G$ be a bipartite graph with parts $A$ and $B$. Suppose $G'$ is the graph obtained by applying Procedure $\mathcal P(q)$ to $G$. Then $G'$ is $K_{2,q}$-free.
\end{lemma}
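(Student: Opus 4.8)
The plan is to show that after Procedure $\mathcal P(q)$ terminates, the output graph $G'$ contains no copy of $K_{2,q}$. The key observation is that $K_{2,q}$-freeness is equivalent to the statement that $G'$ has no $q$-dense pair: if $\{x,y\}$ were a $q$-dense pair, then by definition $\abs{N_{G'}(x,y)}\ge q$, and $x,y$ together with any $q$ of their common neighbors would form a $K_{2,q}$; conversely a copy of $K_{2,q}$ immediately yields a pair with at least $q$ common neighbors. So the entire task reduces to proving that $G'$ has no $q$-dense pair.

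The main idea is to argue by contradiction, using the termination condition of the while loop. When the procedure terminates, we have $\psi=0$, which means neither Step~1 nor Step~2 performed any deletion in the final pass. First I would use the fact that $G'$ has no $q$-sparse pair $\{x,y\}$ with an ``active'' deletion available, and more importantly I would exploit Step~2: since the procedure terminated, there is no set $M$ of edges whose removal decreases the number of $q$-dense pairs by at least $\abs{M}/2$. The plan is to suppose for contradiction that $G'$ still contains a $q$-dense pair, and then construct an explicit set $M$ of edges violating the Step~2 termination condition, thereby showing the loop could not have stopped. A natural candidate for $M$ is a small set of edges incident to the common neighborhood of a $q$-dense pair, chosen so that deleting them destroys many dense pairs relative to $\abs{M}$; here the spanning-forest structure recorded in $D'(x,y)$ (which has $\abs{D'(x,y)}\ge \abs{S'(x,y)}/2$ edges and no isolated vertices) is exactly the device that guarantees a favorable ratio between dense pairs destroyed and edges removed.

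The hard part will be correctly quantifying the interaction between $q$-sparse and $q$-dense pairs so that the Step~2 condition can be triggered. Specifically, I expect the main obstacle to be verifying that whatever dense pairs remain can be packaged into a set $M$ with the required $\abs{M}/2$ efficiency, without double-counting and while respecting that some pairs are ``neither $q$-sparse nor $q$-dense.'' I would handle this by working locally at a single surviving $q$-dense pair $\{x,y\}$ and using the edges between $\{x,y\}$ and $N_{G'}(x,y)$ as the set $M$: removing these $2\abs{N_{G'}(x,y)}\ge 2q$ edges kills at least one dense pair per removed endpoint in a controlled way, but the efficiency bound must be matched against $\abs{M}/2$, so the count has to be done carefully. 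If a direct single-pair argument does not meet the ratio, the fallback is to take $M$ to be the union of the forests $D'$ or the edge sets $F'$ associated with the relevant sparse pairs and invoke Observation~\ref{PropertiesOfG'}(2) together with the bounds in Observation~\ref{basicpropertiesofProcedure}.

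In summary, the proof proceeds in three moves: (i) reduce $K_{2,q}$-freeness to the absence of $q$-dense pairs; (ii) assume a $q$-dense pair survives in $G'$ and invoke the termination of the while loop, which forces the failure of both the Step~1 and Step~2 deletion conditions; and (iii) derive a contradiction by exhibiting a set $M$ of edges whose removal decreases the number of $q$-dense pairs by at least $\abs{M}/2$, contradicting Observation~\ref{PropertiesOfG'}(2). The crux, and the step I would spend the most care on, is step (iii)'s construction of $M$ and the accompanying counting of destroyed dense pairs.
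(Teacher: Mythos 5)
Your reduction in step (i) is fine: a copy of $K_{2,q}$ in $G'$ is exactly a $q$-dense pair together with $q$ of its common neighbors, so it suffices to rule out $q$-dense pairs in $G'$. But the heart of your argument --- step (iii), exhibiting a single set $M$ that violates the Step~2 termination condition --- has a genuine gap, and the specific candidate you offer does not work. Taking $M$ to be the edges between a surviving dense pair $\{x,y\}$ and $N_{G'}(x,y)$ gives $\abs{M} = 2\abs{N_{G'}(x,y)} \ge 2q$, so you would need the deletion to destroy at least $\abs{N_{G'}(x,y)} \ge q$ dense pairs; in general, however, the deletion is only guaranteed to destroy the single pair $\{x,y\}$ (plus possibly a few pairs of the form $\{x,a'\}$ or $\{y,a'\}$ whose neighborhoods happen to sit inside $N_{G'}(x,y)$), so the required ratio fails badly. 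Your fallback --- taking $M$ to be a union of the recorded sets $D'$ or $F'$ --- is not meaningful either: those sets are snapshots taken at the moment a pair became $q$-sparse during the run of the procedure, not structures living in the final graph $G'$, and there is no count tying their removal to a proportional drop in the number of dense pairs of $G'$.

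The paper's proof shows what is actually needed, and it is not a single local violation of Step~2. It introduces $q$-brooms and uses the Step~1 termination condition (Observation \ref{PropertiesOfG'}, part 1) to show $G'$ has no $q$-broom of size $q/2$ (Claim \ref{nobroom}); it then fixes a vertex $a$ lying in a dense pair, considers \emph{all} dense pairs $\{a,a_j\}$, $1\le j\le p$, with common neighborhoods $B_j$, and applies the Step~2 condition twice in two different ways (Claims \ref{hallscondition1} and \ref{hallscondition2}) to obtain $\abs{\bigcup_{j\in J} B_j} > 2\abs{J}$ for every $J$ and $\abs{J(v)} > 2\abs{D(v)}$ for every $v$. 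The no-broom claim gives $\abs{D(v)} \ge \tfrac12\abs{B_j}$ for $v\in B_j$, Hall's theorem (enabled by Claim \ref{hallscondition1}) supplies distinct representatives $w_j \in B_j$, and the contradiction is a global double count:
\[
\sum_{v\in B'} \abs{J(v)} > 2\sum_{v\in B'}\abs{D(v)} \ge \sum_{1\le j\le p} \abs{B_j} = \sum_{v\in B'} \abs{J(v)}.
\]
In particular, your plan invokes the Step~1 condition only in passing, yet it is indispensable: without the broom bound there is no way to lower-bound the number of dense pairs inside the neighborhoods $B_j$, which is precisely the quantity your ``efficiency'' count would need. So the proposal identifies the right starting point and the right tools to exploit (the termination conditions), but it is missing the key ideas --- brooms, Hall's theorem, and the global double counting --- without which the argument does not close.
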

\begin{proof}
Let us define a \emph{$q$-broom} of size $k$ to be a set of $q$-sparse pairs $\{x_0,x_j\}$ (with $1 \le j \le k$), and a $q$-dense pair $\{y,z\}$ such that $\{y,z\}$ is contained in the common neighborhood of $x_0, x_j$ for every $1 \leq j \leq k$. Note that either $\{x_0, x_1, \ldots, x_k\} \subseteq A$ and $\{y,z\} \subseteq B$ or $\{x_0, x_1, \ldots, x_k\} \subseteq B$ and $\{y,z\} \subseteq A$. 

\begin{claim}
    \label{nobroom}
There is no $q$-broom of size $q/2$ in $G'$. 
\end{claim}
\begin{proof}
Suppose by contradiction that there is a set of $q$-sparse pairs $\{x_0,x_j\}$ (with $1 \le j \le q/2$), and a $q$-dense pair $\{y,z\}$ such that $\{y,z\}$ is contained in the common neighborhood of $x_0$ and $x_j$ for every $1 \leq j \leq q/2$. Then the edge $x_0y$ is contained in the sets $F'(x_0,x_j)$ for every $1 \le j \le q/2$, which contradicts Observation \ref{PropertiesOfG'}.
%after the redefining step the new F
\end{proof}

Let us suppose for a contradiction (to Lemma \ref{steps}) that $G'$ contains a copy of $K_{2,q}$. Then  $G'$ contains at least one $q$-dense pair. Without loss of generality we may assume there is a $q$-dense pair $\{a,a_1\}$ in $A$.  
Suppose $\{a,a_j\}$ (for $1\leq j \leq p$) are all the $q$-dense pairs of $G'$ containing the vertex $a$. 
For each $1\leq j\leq p$,
let $B_j\subseteq B$ be the common neighborhood of  $a$ and $a_j$ in $G'$. By definition,  $|B_j|\geq q$ for $1\leq j \leq p$. 
\begin{claim}
    \label{hallscondition1}
For any $J\subseteq \{1,2,\ldots, p\}$, we have $\abs{\bigcup_{j\in J} B_j}>2\abs{J}$.
\end{claim}
\begin{proof}
Let us assume for contradiction that there exists a $J\subseteq \{1,2,\ldots, p\}$ such that
$\abs{\bigcup_{j\in J} B_j}\leq 2\abs{J}$. Let $G^*$ be obtained 
from $G'$ by deleting all the edges from $a$ to $\bigcup_{j\in J} B_j$.
For each $j\in J$, the pair $\{a,a_j\}$ has no common neighbor in $G^*$ since we have
removed all the edges from $a$ to $B_j$. Thus the pair $\{a,a_j\}$ is not $q$-dense in $G^*$. So in forming $G^*$ from $G'$ the 
number of $q$-dense pairs decreases by at least $|J|$, while the number of edges
decreases by $|\bigcup_{j\in J} B_j| \le 2|J|$ edges, contradicting Observation \ref{PropertiesOfG'}.
\end{proof}

Let $B'=\bigcup_{1\leq j \leq p} B_j$. For each vertex $v\in B'$ and let 
$$J(v) :=\{j \mid v \in B_j\},$$
$$D(v):= \{ \{v,u\} \mid \{v,u\} \text{ is $q$-dense in $G'$ and }  \{v,u\}\subseteq B_j \text{ for some }
j\in J(v) \}.$$

In the next two claims, we will prove two useful inequalities concerning $\abs{J(v)}$ and $\abs{D(v)}$.

\begin{claim}
    \label{hallscondition2}
For each $v\in B'$, $\abs{J(v)}>2\abs{D(v)}$.
\end{claim}

\begin{proof}
Suppose for contradiction that there is a vertex $v\in B'$ such that $\abs{J(v)}\leq 2\abs{D(v)}$. 
Let us delete all the edges of the form $va_j$, $j \in J(v)$, from $G'$ and let the resulting graph be $G^*$. Since we deleted $\abs{J(v)}$ edges, by Observation \ref{PropertiesOfG'}, the number of  $q$-dense pairs decreases by less than $\abs{J(v)}/2\leq \abs{D(v)}$.
So there exists $\{v,u\}\in D(v)$ such that $\{v,u\}$ is (still) $q$-dense in  $G^*$. That is, $|N^*(v, u)| \geq q$, where $N^*(v,u)$ denotes the common neighborhood of $v$ and $u$ in $G^*$. Clearly each
pair of vertices in $N^*(v,u)$ is contained in a copy of $K_{2,q}$ in $G^*$ (and hence in $G'$).

For each pair of vertices in $N^*(v,u)$, since it is contained in a copy of $K_{2,q}$ in $G'$, it is
either $q$-sparse or $q$-dense in $G'$. Note that $a\in N^*(v,u)$.
If all the pairs $\{a,x\}$ with $x\in N^*(v,u)\setminus \{a\}$ are $q$-sparse in $G'$ then the set of these pairs together with $\{v, u\}$ is a $q$-broom of size at least $q-1 \ge q/2$ in $G'$, which contradicts Claim \ref{nobroom}. So there exists a vertex $x \in N^*(v,u)\setminus \{a\}$ such that $\{a,x\}$ is $q$-dense in $G'$. Since $v$ is adjacent to both $a$ and $x$, 
by the definition of $J(v)$, $x=a_j$ for some $j\in J(v)$. However, by definition, in forming $G^*$ we have removed $vx$ from $G'$. This contradicts $x\in N^*(v,u)$ and completes the proof.
\end{proof}

%to be read

\begin{claim}
    \label{lowerboundsummv}
$$\sum_{v\in B'} \abs{D(v)} \geq \frac {1}{2} \sum_{1\le j \le p} |B_j|.$$
\end{claim}
\begin{proof} Fix any $j$ with $1\leq j \leq p$.
Since $\{a,a_j\}$ is $q$-dense in $G'$,
every pair $\{x,y\} \subseteq B_j$ is contained in some copy of $K_{2,q}$ and hence 
is  either $q$-dense or $q$-sparse in $G'$.  Let $v$ be any vertex in $B_j$ and let $S(v)=\{y\in B_j \mid \{v,y\} \text{ is  $q$-sparse in $G'$}\}$. By definition, the set $\{\{v,y\} \mid y\in S(v)\}$ together with $\{a,a_j\}$ is a $q$-broom of size $\abs{S(v)}$. By Claim \ref{nobroom}, $|S(v)| \leq q/2-1\leq \abs{B_j}/2-1$. Since $\abs{D(v)}+\abs{S(v)} \ge \abs{B_j}-1$, we have 
\begin{equation} 
    \label{lowerboundmv}
    \abs{D(v)}\geq \frac {1}{2} \abs{B_j}
\end{equation}
Note that \eqref{lowerboundmv} holds for every $j=1,\ldots, p$ and every $v\in B_j$.

Let us define an auxiliary bipartite graph $G_{aux}$ with a bipartition $(\{1,2,\ldots p\},B')$ in which
a vertex $j\in \{1,\ldots, p\}$ is joined to a vertex $y\in B'$ if and only if $y\in B_j$.
Let $J$ be an arbitrary subset of $\{1,2,\ldots, p\}$. The neighborhood of $J$ in $G_{aux}$ is precisely
$\bigcup_{j\in J} B_j$. By Claim \ref{hallscondition1},   $\abs{\bigcup_{j\in J} B_j} > 2\abs{J} \geq \abs{J}$. Since this holds for every $J\subseteq\{1,\ldots, p\}$, by Hall's theorem \cite{Hall} there exist distinct vertices  $w_j \in B_j$, for $j=1,\ldots, p$.  
By \eqref{lowerboundmv},  for every 
$j\in \{1,\ldots, p\}, \abs{D(w_j)}\geq
\frac{1}{2}\abs{B_j}$. Hence
$$\sum_{v\in B'}\abs{D(v)}\geq \sum_{1\leq j \leq p}\abs{D(w_j)}\geq \frac {1}{2} \sum_{1\le j \le p} |B_j|.$$
\end{proof}

If we view $\{B_1,\dots, B_p\}$ as a hypergraph on the vertex set $B'$, then the degree of a vertex $v\in B'$ in it is precisely $\abs{J(v)}$ and the degree sum formula yields
\begin{equation}
\label{eq:sumBj}
\sum_{v\in B'}\abs{J(v)} =\sum_{1\leq j \leq p} \abs{B_j}.
\end{equation}

Using Claim \ref{hallscondition2} and Claim \ref{lowerboundsummv} we have 
$$\sum_{v\in B'}\abs{J(v)}> \sum_{v\in B'}2\abs{D(v)}\geq 2  \sum_{1\le j \le p} \frac {1}{2} |B_j|=\sum_{1\le j \le p} |B_j|,$$ 
which contradicts \eqref{eq:sumBj}. 
This completes proof of Lemma \ref{steps}.
\end{proof}

In the next subsection we will prove a general lemma about making an arbitrary hypergraph $K_{1,2,q}$-free (for any given value of $q$). This lemma is used several times in the following subsections.

%%%%%%%%%%%%%%%%%%%%%%%%%%%%%%%

\subsection{Applying Procedure $\mathcal P(q)$ to an arbitrary hypergraph $H$}
Let $q$ be an even integer and let $q \geq t$. Let $H$ be an arbitrary $K_{2,q}^{(3)}$-free $3$-partite $3$-graph with parts $A, B$ and $C$. In this subsection we will prove the following lemma that estimates the number of edges removed from the graphs $G_i = G_i[H](A,B)$ for $1 \le i \le n$, when the Procedure $\mathcal P(q)$ is applied to them. This lemma together with Remark \ref{AB,BC,CAreducedtoAB} will allow us to estimate the number of edges removed from $H$ when the Procedure $\mathcal P(q)$ is applied to it.

Throughout this subsection, $N_i(x,y)$ denotes the set of common neighbors of the vertices $x,y$ in the graph $G_i$.

\begin{lemma}
\label{graphupperbound}
Let $q\geq t$ be an even integer. Let $H$ be an arbitrary $K_{2,q}^{(3)}$-free $3$-partite $3$-graph with parts $A, B$ and $C$. Let $G_i=G_i[H](A,B)$ for $1 \le i \le n$. For each $1 \le i \le n$ and any $x,y\in A$ or $x,y\in B$, let $F'_i(x,y)$ be defined by applying the procedure $\mathcal{P} (q)$ to $G_i$ and let the resulting graph be $G_i'$. Then,
$$\sum_{1\leq i \leq n}\abs{G_i\setminus G_i'}<\frac{2}{q} \left(\sum_{u,v\in A}\sum_{1\leq i\leq n}\abs{F'_i(u,v)}+\sum_{u,v\in B}\sum_{1\leq i\leq n}\abs{F'_i(u,v)} \right)+2tn^2.$$
\end{lemma}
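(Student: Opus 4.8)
My goal is to bound the total number of edges removed from the graphs $G_i$ across all $i$ by charging each removed edge to one of the three deletion mechanisms in Procedure $\mathcal P(q)$. The procedure deletes edges in exactly two places: Step~1 deletes an edge $ab$ when it lies in $F(x,y)$ for at least $q/2$ pairs $\{x,y\}$, and Step~2 deletes a batch $M$ of edges whenever removing them destroys at least $|M|/2$ of the $q$-dense pairs. I would therefore split $\sum_i |G_i \setminus G_i'|$ into the edges removed by Step~1 and the edges removed by Step~2, and estimate each sum separately. The hoped-for shape of the bound—a term proportional to $\frac{2}{q}\sum |F'_i|$ plus a $2tn^2$ term—strongly suggests that Step~1 contributes the $F'$ term and Step~2 contributes the $2tn^2$ term.

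\textbf{Step-1 deletions.}
First I would bound the edges deleted in Step~1. Each such deletion of an edge $e=ab$ in some $G_i$ is triggered by $e$ belonging to $F_i(x,y)$ for at least $q/2$ distinct pairs $\{x,y\}$ (with $x,y$ on the same side). I would set up a double-counting / charging argument: assign to each Step-1 deletion a ``cost'' of $1$, and observe that because the deletion requires membership in at least $q/2$ of the sets $F_i(x,y)$, the total number of deletions is at most $\frac{2}{q}$ times the total size of all the $F$-sets encountered. The subtlety is that Step~1 uses the running sets $F(x,y)$, while the statement is phrased in terms of the final sets $F'_i(x,y)$. Since the sets $F_i(x,y)$ get their values once (when $\{x,y\}$ first becomes $q$-sparse) and then remain fixed as $F'_i(x,y)$, I expect the incidences counted during the run to be dominated by $\sum_{x,y}\sum_i |F'_i(x,y)|$, split according to whether $\{x,y\}\subseteq A$ or $\subseteq B$. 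This yields the first term $\frac{2}{q}\big(\sum_{u,v\in A}\sum_i |F'_i(u,v)| + \sum_{u,v\in B}\sum_i |F'_i(u,v)|\big)$.

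\textbf{Step-2 deletions.}
Next I would bound the edges deleted in Step~2 across all $i$. By the defining property of Step~2, every batch $M$ removed satisfies: the number of $q$-dense pairs drops by at least $|M|/2$. Summing over all Step-2 operations within a single $G_i$, the total number of Step-2 edge deletions in $G_i$ is at most twice the total decrease in the number of $q$-dense pairs over the whole run, which is at most twice the initial number of $q$-dense pairs in $G_i$. The key input here is the $K_{2,q}^{(3)}$-freeness (with $q\geq t$) of $H$: I would argue that this forces the number of $q$-dense pairs in each $G_i$ to be small—specifically that each $G_i$ cannot have too many $q$-dense pairs, since too many would build up a $K_{2,t}^{(3)}$ (or a forbidden configuration) using the third part $C$. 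Summing over the $n$ choices of $i$ should then give the $2tn^2$ term. I expect this hypergraph-to-graph translation—showing that $K_{2,q}^{(3)}$-freeness caps the number of $q$-dense pairs per $G_i$ by roughly $tn$—to be the main obstacle, since it is where the hypergraph structure (rather than pure bookkeeping about the procedure) genuinely enters.

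\textbf{Combining.}
Finally I would add the two contributions. The Step-1 bound gives the $\frac{2}{q}(\cdots)$ term and the Step-2 bound gives $2tn^2$, and since these account (via Remark~\ref{AB,BC,CAreducedtoAB}-type reasoning within a single coordinate direction) for all removed edges, summing them yields exactly the claimed inequality, with the strict inequality inherited from the strict ``$<|M|/2$'' and ``$<q$'' bounds in Observation~\ref{PropertiesOfG'} and Claim~\ref{NowhereObservation}. The routine part is the arithmetic of combining the two sums; the conceptual crux is the per-graph bound on $q$-dense pairs coming from hypergraph freeness.
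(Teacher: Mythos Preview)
Your overall architecture is exactly the paper's: split $\sum_i|G_i\setminus G_i'|$ into Step-1 deletions and Step-2 deletions, charge Step-1 against the $F'$-sets to get the $\frac{2}{q}(\cdots)$ term, and charge Step-2 against the number of $q$-dense pairs to get the $2tn^2$ term. Your Step-1 argument is fine and matches the paper (each Step-1 deletion of $ab$ in $G_i$ lies in at least $q/2$ of the sets $F'_i(x,y)$, so $\alpha_i\le \frac{2}{q}\sum_{x,y}|F'_i(x,y)|$).

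The gap is in your Step-2 plan. You propose to bound the number $m_i$ of $q$-dense pairs \emph{in each $G_i$ separately} by ``roughly $tn$'', arguing that too many $q$-dense pairs in a single $G_i$ would force a $K_{2,t}^{(3)}$. This per-$i$ bound is false. Take $H$ to consist of all triples $abc_i$ through one fixed vertex $c_i\in C$; then $G_i$ is the complete bipartite graph on $A\cup B$, so $m_i=2\binom{n}{2}\sim n^2$, yet $H$ is $K_{2,t}^{(3)}$-free for every $t\ge 2$ because any two hyperedges share $c_i$ and hence the required $2t$ vertices $x_1,\dots,x_t,y_1,\dots,y_t$ cannot be made distinct. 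So no useful bound on an individual $m_i$ follows from the hypothesis.

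What the paper does instead is bound the \emph{sum} $\sum_i m_i$ by a double count over pairs: for each fixed pair $\{u,v\}$ (in $A$ or in $B$), the pair can be $q$-dense in fewer than $t$ of the graphs $G_1,\dots,G_n$. Indeed, if $\{u,v\}$ were $q$-dense in $G_{i_1},\dots,G_{i_t}$, then since each $|N_{i_\ell}(u,v)|\ge q\ge t$ one can greedily pick distinct $y_\ell\in N_{i_\ell}(u,v)$, and the triples $uy_\ell c_{i_\ell},\,vy_\ell c_{i_\ell}$ form a $K_{2,t}^{(3)}$. Summing this multiplicity bound over all $2\binom{n}{2}$ pairs gives $\sum_i m_i\le 2(t-1)\binom{n}{2}<tn^2$, and then $\sum_i\beta_i\le 2\sum_i m_i<2tn^2$. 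Replace your per-$G_i$ estimate with this across-$i$ multiplicity argument and the rest of your outline goes through verbatim.
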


\begin{proof}[Proof of Lemma \ref{graphupperbound}]

First let us prove the following claim.
\begin{claim}
    \label{densemultiplicity}
 Let $u,v\in A$ or $u,v\in B$. Then $\{u,v\}$ is $q$-dense in less than $t$ of the graphs  $G_i$, $1 \leq i \leq n$.
%Number of $1\leq i \leq n$ such that $\{u,v\}$ is $q$-dense in $G_i$ is less than $t$.
\end{claim}
\begin{proof}
%Let $I\subset \{1,2,\ldots n\}$ be the set such that $i\in I \Longleftrightarrow \{u,v\}$ is $q$-dense in $G_i$. 
Without loss of generality, suppose that $u,v\in A$. Suppose for contradiction that $\{u,v\}$ 
is $q$-dense in $t$ of the graphs $G_i$, $1 \leq i \leq n$. Without loss of generality suppose $\{u,v\}$
is $q$-dense in $G_1,\ldots, G_t$. Then $\abs{N_i(u,v)}\geq q\geq t$ for $i=1,\ldots, t$.
Therefore, we can greedily choose $t$ distinct vertices $y_1,\ldots, y_t$ 
such that for each $i\in [t], y_i\in N_i(u,v)$. For each $i\in [t]$,
since $y_i\in N_i(u,v)$ we have $uy_ic_i,vy_ic_i\in E(H)$. 
However, the set of hyperedges $\{uy_ic_i,vy_ic_i\in E(H) \mid 1\leq i \leq t\}$ forms a copy of $K_{2,t}^{(3)}$ in $H$, a contradiction.
\end{proof}

%For each $1\leq i\leq n$, let us apply the procedure $\mathcal {P}(q)$ to $G_i$ to obtain the graph $G'_i$. 
Note that when procedure $\mathcal {P}(q)$ is applied to $G_i$ (to obtain $G'_i$), Step 1 and Step 2 may be applied several times (and each time one of these steps is applied it may delete an edge of $G_i$). 

For each $i\in [n]$, let $m_i$ denote the number of $q$-dense pairs of $G_i$.
By Claim \ref{densemultiplicity}, we know that each pair $\{u,v\}$ with $u,v\in A$ or $u,v\in B$, is $q$-dense in less than $t$ different graphs $G_i$ (for $1\leq i \leq n$). Therefore, 
\begin{equation}
\label{eq:estimatingm_i}
\sum_{1\leq i \leq n} m_i \le \sum_{u,v\in A}(t-1)+\sum_{u,v\in B}(t-1)=2\binom{n}{2}(t-1).
\end{equation}

For each $i\in [n]$,
let $\alpha_i$ denote the total number of edges that were removed by Step 1 when procedure $\mathcal {P}(q)$ is applied to $G_i$ and $\beta_i$ be the number of edges removed by Step 2 when procedure $\mathcal {P}(q)$ is applied to $G_i$. 
Then $\alpha_i+\beta_i=\abs{G_i\setminus G'_i}$, so $\sum_{i=1}^n \alpha_i + \sum_{i=1}^n \beta_i = \sum_{i=1}^n \abs{G_i\setminus G'_i}$.

First, we bound $\sum_{i=1}^n \beta_i$. Let $i\in [n]$.
Observe that whenever a set $M$ of edges were removed by Step 2 of Procedure $\mathcal P(q)$
applied to $G_i$, the number of $q$-dense pairs decreased by at least $\abs{M}/2$. Hence
$\beta_i \le 2 m_i$. So summing up over all $1 \le i \le n$, and using \eqref{eq:estimatingm_i}, we get
\begin{equation}
    \label{step2edges}
    \sum_{1\leq i \leq n}\beta_i\leq 2\sum_{1\leq i \leq n}m_i\leq 2n(n-1)(t-1)<2tn^2.
\end{equation}

Next, we bound $\sum_{i=1}^n \alpha_i$.
Let $i\in [n]$. If an edge $xy$ was removed from $G_i$ by Step 1 of the procedure $\mathcal {P}(q)$ then there are vertices $z_1,z_2,\ldots ,z_{q/2}$ such that $xy \in F'_i(x,z_j)$ for every $j\in\{1,2,\ldots ,q/2\}$ or $xy \in F'_i(y,z_j)$ for every $j \in \{1,2,\ldots ,q/2\}$. So

$$\alpha_i\leq \frac{1}{q/2} \left (\sum_{u,v\in A}\abs{F'_i(u,v)}+\sum_{u,v\in B}\abs{F'_i(u,v)} \right).$$
Therefore, $$\sum_{1\leq i\leq n}\alpha_i\leq \frac{2}{q} \left (\sum_{1\leq i\leq n}\sum_{u,v\in A}\abs{F'_i(u,v)}+\sum_{1\leq i\leq n}\sum_{u,v\in B}\abs{F'_i(u,v)} \right).$$

This is equivalent to the following. 

\begin{equation}
    \label{sumoffi}
    \sum_{1\leq i\leq n}\alpha_i\leq \frac{2}{q} \left(\sum_{u,v\in A}\sum_{1\leq i\leq n}\abs{F'_i(u,v)}+\sum_{u,v\in B}\sum_{1\leq i\leq n}\abs{F'_i(u,v)} \right).
\end{equation}

Combining this inequality with \eqref{step2edges} completes the proof of Lemma \ref{graphupperbound}.
\end{proof}

%%%%%%%%%%%%%%%%%%%%%%%%%%%%%%
\subsection {The overall plan}

 Let us define the sequence $q_0,q_1,\ldots, q_k$ as follows. Let $q_0=2^l$ where $l$ is an integer such that $q_0=2^l\leq t^2<2^{l+1}=2q_0$. For each $1\leq j \leq k$, let $q_{j}=\frac{q_{j-1}}2$ and $q_k\geq t >\frac{q_k}2$. Clearly $\frac{q_0}{q_k}=2^k$, moreover $$2^k=\frac{q_0}{q_k}\leq \frac{t^2}{t}=t.$$ So we have 
 \begin{equation}
    \label{boundingk}
k\leq \log t.
 \end{equation}
 
Now we apply the procedure $\mathcal{P}(q_0)$ to the hypergraph $H$ (recall Definition \ref{ApplyingP(q)toHypergraph})
to obtain a $K_{1,2,q_0}$-free hypergraph $H_0$. For each $0 \leq j < k$ we obtain $K_{1,2,q_{j+1}}$-free hypergraph $H_{j+1}$ by applying the procedure $\mathcal{P}(q_{j+1})$ to the hypergraph $H_{j}$.

This way, in the end we will get a $K_{1,2,q_k}$-free hypergraph $H_k$.
In the following section, we will upper bound $\abs{H}-\abs{H_0}$. Then in the next section, using the information that $H_{j}$ is $K_{1,2,q_{j}}$-free, we will upper bound  $\abs{H_{j+1}}-\abs{H_{j}}$ for each $0\leq j < k$. Then we sum up these bounds to upper bound the total number of deleted edges (i.e., $\abs{H}-\abs{H_k}$) from $H$ to obtain $H_k$. Finally, we bound the size of $H_k$, which will provide us the desired bound on the size of $H$.

%%%%%%%%%%%%%%%%%%%%

\subsection{Making $H$ $K_{1,2,q_0}$-free}
First, we are going to prove an auxiliary lemma that is similar to Lemma A.4 of \cite{MV}.
In an edge-colored multigraph $G$, an \emph{$s$-frame} is a collection of $s$ edges all of different colors
such that it is possible to pick one endpoint from each edge with all the selected endpoints being distinct.
%%%%%%%%%%%%%%%%%%%%%%%%%%%%%%%%%%%%%%%%%%%%%%%%%%%%%%%%%%%%%%%%%%%%%%%%%%%%%%%

\begin{lemma} \label{frame}
Let $G$ be an edge-colored multigraph with $e$ edges such that each edge has multiplicity at most $p$ and
each color class has size at most $q$. If $G$ contains no $t$-frame then $|G|\leq \binom{t-1}{2}p+tq$.
\end{lemma}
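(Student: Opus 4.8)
The plan is to analyze a \emph{maximum} frame and exploit its maximality to trap all remaining edges. Let $\mathcal F=\{f_1,\dots,f_s\}$ be a frame of maximum size in $G$, with pairwise distinct colors $c_1,\dots,c_s$ and distinct representatives $v_1\in f_1,\dots,v_s\in f_s$. Since $G$ contains no $t$-frame we have $s\le t-1$. Write $V_0=\{v_1,\dots,v_s\}$ for the set of representatives and $C_0=\{c_1,\dots,c_s\}$ for the set of colors used by $\mathcal F$, so that $\abs{V_0}=\abs{C_0}=s\le t-1$. I would then split the edges of $G$ into two groups, according to whether their color lies in $C_0$ or not, and bound each group separately.

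For the edges whose color lies in $C_0$: since each color class has size at most $q$ and $\abs{C_0}=s\le t-1$, there are at most $(t-1)q\le tq$ such edges. This accounts for the $tq$ term. For the edges whose color lies outside $C_0$, here is the key step: I claim every such edge has \emph{both} endpoints in $V_0$. Indeed, suppose $e$ is an edge of color $c\notin C_0$ having an endpoint $x\notin V_0$. Then $\mathcal F\cup\{e\}$ consists of $s+1$ edges of pairwise distinct colors (as $c$ is new), and $v_1,\dots,v_s,x$ are distinct representatives of them (as $x\notin V_0$); this is a frame of size $s+1$, contradicting the maximality of $\mathcal F$. Hence both endpoints of every new-colored edge lie in $V_0$. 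Since $G$ has edge-multiplicity at most $p$, the number of edges with both endpoints in $V_0$ is at most $\binom{s}{2}p\le\binom{t-1}{2}p$, which accounts for the $\binom{t-1}{2}p$ term. Summing the two bounds gives $\abs{G}\le\binom{t-1}{2}p+(t-1)q\le\binom{t-1}{2}p+tq$, as required.

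The only real content is the extension claim; everything after it is routine counting against the multiplicity bound $p$ and the color-class bound $q$. The pitfall I would watch for is over-thinking the non-extendability condition: one might expect to need a Hall-type or augmenting-path argument to decide when $\mathcal F\cup\{e\}$ admits a system of distinct endpoint-representatives. But because we only need to extend an \emph{already existing} system by a single edge, it suffices to observe that any endpoint of $e$ lying outside the current representative set $V_0$ can immediately serve as the new representative. Thus non-extendability forces both endpoints of $e$ into $V_0$, and no finer combinatorial analysis is needed. I would also note the degenerate case $s=0$ (no edges at all), where the claim correctly yields $\abs{G}=0$.
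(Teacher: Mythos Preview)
Your argument is correct and essentially identical to the paper's own proof: take a maximum frame, use its maximality to force every edge of a new color to have both endpoints among the chosen representatives, and then count using the multiplicity and color-class bounds. The only cosmetic difference is that the paper records the slightly sharper intermediate bound $\binom{s}{2}p+sq$ before relaxing $s\le t-1$, whereas you relax earlier.
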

\begin{proof}
Consider a maximum frame $S$, say with edges $e_1,\ldots, e_s$ such that for every $i \in \{1,2, \ldots, s\}$, $e_i$ has color $i$ and
that there exist $x_1\in e_1,x_2\in e_2,\ldots, x_s\in e_s$ with $x_1,\ldots, x_s$ being distinct. 
By our assumption, $s \leq  t-1$. 
Let $f$ be any edge with a color not in $[s]$. Then both vertices of $f$ must be in $\{x_1,\ldots, x_s\}$, otherwise
$e_1,\ldots, e_s, f$ give a larger frame, a contradiction. On the other hand, each edge with both of its vertices in $\{x_1,\dots, x_s\}$ has
multiplicity at most $p$. Hence there are at most $\binom{s}{2}p$ edges with colors not in $\{1,2,\ldots, s\}$. The number of
edges with color in $\{1,2,\ldots, s\}$ is at most $sq$ by our assumption. 
So $|G|\leq \binom{s}{2}p+sq\leq \binom{t-1}{2}p+tq$.
\end{proof}

Let us recall that $H$ is $3$ partite $K_{2,t}^{(3)}$-free hypergraph with $A,B,C$. For convenience we denote $G_i=G_i[H](A,B)$ where $1\leq i \leq n$. For each $1 \le i \le n$ and any $x,y\in A$ or $x,y\in B$, let $F_i'(x,y)$, $D_i'(x,y)$ and $S_i'(x,y)$ be defined by applying the procedure $\mathcal{P}(q_0)$ on $G_i$ and let the obtained graph be $G_i'$.

First, observe that $t^2/2<q_0\leq t^2$ according to our definition.
\begin{claim}
    \label{sparsemultiplicity}
Let $u,v\in A$ or $u,v\in B$. Then $\sum_{1\leq i \leq n}\abs{F_i'(u,v)}\leq 6t^3$.
\end{claim}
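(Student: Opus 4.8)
The plan is to interpret the quantity $\sum_i \abs{F_i'(u,v)}$ as the edge count of a single auxiliary edge-colored multigraph and then bound it with the frame lemma (Lemma \ref{frame}). Assume without loss of generality that $u,v \in A$; the case $u,v \in B$ is symmetric with the roles of $A$ and $B$ exchanged. By parts (3) and (4) of Observation \ref{basicpropertiesofProcedure} we have $\abs{F_i'(u,v)} = 2\abs{S_i'(u,v)} \le 4\abs{D_i'(u,v)}$ for every $i$, so it suffices to bound $\sum_{1 \le i \le n} \abs{D_i'(u,v)}$. Since $u,v \in A$, every edge of every $D_i'(u,v)$ is a $q_0$-dense pair contained in $S_i'(u,v)$, a subset of the common neighborhood of $u,v$ in $G_i$, and hence a pair inside $B$. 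I therefore define the edge-colored multigraph $G^*$ on vertex set $B$ whose edge multiset is $\bigcup_{1 \le i \le n} D_i'(u,v)$, where each edge coming from $D_i'(u,v)$ receives color $i$; thus $\abs{G^*} = \sum_i \abs{D_i'(u,v)}$, and I aim to apply Lemma \ref{frame} to $G^*$ with $p = t-1$ and $q = t^2$.

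The two size parameters are immediate. Each color class is exactly $D_i'(u,v)$, which has fewer than $q_0 \le t^2$ edges by Claim \ref{NowhereObservation}, so every color class has size at most $t^2$. For the multiplicity, fix a pair $\{w,w'\} \subseteq B$: if $\{w,w'\} \in D_i'(u,v)$ then $\{w,w'\}$ is $q_0$-dense in the subgraph of $G_i$ present when $\{u,v\}$ turned $q_0$-sparse, hence, by monotonicity of the common neighborhood, $q_0$-dense in $G_i$ itself. Because $H$ is $K_{2,t}^{(3)}$-free and $q_0 \ge t$, the argument of Claim \ref{densemultiplicity} shows $\{w,w'\}$ is $q_0$-dense in fewer than $t$ of the graphs $G_i$, so $\{w,w'\}$ occurs in at most $t-1$ of the sets $D_i'(u,v)$; that is, its multiplicity in $G^*$ is at most $t-1$.

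The crux is to show that $G^*$ contains no $t$-frame, and this is where $K_{2,t}^{(3)}$-freeness is used a second time. Suppose $G^*$ had a $t$-frame: $t$ edges of pairwise distinct colors $i_1, \dots, i_t$ together with distinct endpoints $\tilde w_{i_1}, \dots, \tilde w_{i_t} \in B$, one chosen from each edge. For each $j$ the vertex $\tilde w_{i_j}$ lies in $S_{i_j}'(u,v)$, hence in the common neighborhood of $u,v$ in $G_{i_j}$, so $u\tilde w_{i_j}, v\tilde w_{i_j} \in G_{i_j}$; by the definition of $G_{i_j} = G_{i_j}[H](A,B)$ this means $u\,\tilde w_{i_j}\,c_{i_j}, \; v\,\tilde w_{i_j}\,c_{i_j} \in H$. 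The colors $i_1, \dots, i_t$ are distinct, so the $c_{i_j}$ are distinct vertices of $C$; the $\tilde w_{i_j}$ are distinct vertices of $B$; and $u,v \in A$. Hence the $t$ pairs $\{\tilde w_{i_j}, c_{i_j}\}$ are pairwise disjoint and disjoint from $\{u,v\}$, so together with the apexes $u,v$ they form a copy of $K_{2,t}^{(3)}$ in $H$ — a contradiction. The main thing to verify carefully in this step is exactly this disjointness and membership bookkeeping, which turns the abstract ``system of distinct representatives'' furnished by a frame into an honestly embedded $K_{2,t}^{(3)}$; I expect this to be the only genuine obstacle, the other two conditions being routine.

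Finally, applying Lemma \ref{frame} to $G^*$ with $p = t-1$ and $q = t^2$ gives
$$\abs{G^*} \le \binom{t-1}{2}(t-1) + t\cdot t^2 = \frac{3t^3 - 4t^2 + 5t - 2}{2} \le \frac{3}{2}t^3 .$$
Combining this with $\sum_i \abs{F_i'(u,v)} \le 4\abs{G^*}$ yields $\sum_{1 \le i \le n} \abs{F_i'(u,v)} \le 6t^3 - 8t^2 + 10t - 4 \le 6t^3$ for all $t \ge 2$ (since $8t^2 - 10t + 4 > 0$), which is exactly Claim \ref{sparsemultiplicity}.
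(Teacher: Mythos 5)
Your proposal is correct and follows essentially the same route as the paper: the same auxiliary edge-colored multigraph $\bigcup_i D_i'(u,v)$ (with color $i$ for edges from $D_i'(u,v)$), the same three verifications (color classes bounded via Claim \ref{NowhereObservation}, multiplicity bounded via Claim \ref{densemultiplicity}, no $t$-frame via $K_{2,t}^{(3)}$-freeness), an application of Lemma \ref{frame}, and the reduction $\abs{F_i'(u,v)} \le 4\abs{D_i'(u,v)}$. Your only departures are cosmetic — using $(p,q)=(t-1,t^2)$ instead of $(t,q_0)$ in the frame lemma and spelling out the monotonicity and disjointness details that the paper leaves implicit — and both yield the bound $6t^3$.
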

\begin{proof}
%Let  $I\subset \{1,2,\ldots n\}$ such that $F_i'(u,v)\not = \emptyset$.

Let $D^*$ be an edge-colored multigraph in which a pair of vertices $e$ is an edge of color $i\in [n]$
whenever $e$ is an edge of $D'_i(u,v)$. The number of edges of color $i$ in $D^*$ is $\abs{D_i'(u,v)}$.  By Claim \ref{NowhereObservation} we have $\abs{D'_i(u,v)}<q_0$. Hence the number of edges in each color class of $D^*$ is less than $q_0$. 

Let $xy$ be an arbitrary edge of $D^*$ and let $I=\{i \in [n] \mid xy\in D'_i(u,v)\}$ . For each $i\in I$, the pair $\{x,y\}$ is $q_0$-dense in $G_i$ by the definition of $D'_i(u,v)$. Therefore, by Claim \ref{densemultiplicity}, we have $\abs{I}<t$. So $xy$ has multiplicity less than $t$ in $D^*$. Since
$xy$ is arbitrary, the multiplicity of each edge of $D^*$ is less than $t$. 

Next, observe that $D^*$ contains no $t$-frame. Indeed, otherwise without loss of generality we may assume  that $D^*$ contains $t$ edges $x_1y_1,\ldots,x_ty_t$, where $x_iy_i$ has color $i$ for each
$i\in [t]$ and $y_1,\dots, y_t$ are distinct. For each $i\in [t]$ since $x_iy_i\in D'_i(u,v)$, in particular
$y_i\in N_i(u,v)$ (where $N_i(u,v)$ denotes the common neighborhood of $u$ and $v$ in $G_i$), which means that $uy_ic_i, vy_ic_i\in H$. But now, $\{uy_ic_i, vy_ic_i \mid i\in [t]\}$ forms 
a copy of $K^{(3)}_{2,t}$, contradicting $H$ being $K^{(3)}_{2,t}$-free.

Therefore, applying Lemma \ref{frame}, we have $\abs{D^*}\leq \binom{t-1}{2}t+tq_0$.
By Claim \ref{NowhereObservation}, we have $$\frac{\abs{F'_i(u,v)}}{4}\le \abs{D'_i(u,v)}.$$ So
$$\sum_{1\leq i \leq n}\frac{\abs{F'_i(u,v)}}{4}\leq \sum_{1\leq i \leq n}\abs{D'_i(u,v)}=\abs{D^*}\leq \binom{t-1}{2}t + tq_0<\frac32t^3, $$
which proves the claim.
\end{proof}

By Lemma \ref{graphupperbound} we have 
$$\sum_{1\leq i \leq n}\abs{G_i\setminus G_i'}<\frac{2}{q_0} \left(\sum_{u,v\in A}\sum_{1\leq i\leq n}\abs{F'_i(u,v)}+\sum_{u,v\in B}\sum_{1\leq i\leq n}\abs{F_i'(u,v)} \right)+2tn^2.$$

Combining it with Claim \ref{sparsemultiplicity} we get
$$\sum_{1\leq i \leq n}\abs{G_i\setminus G_i'}<\frac{2}{q_0} \left(\sum_{u,v\in A}6t^3+\sum_{u,v\in B}6t^3 \right)+2tn^2.$$

%and \eqref{sumoffi}, we have $$\sum_{1\leq i\leq n}\alpha_i\leq \frac{2}{q_0} \left(\sum_{u,v\in A}6t^3+\sum_{u,v\in B}6t^3 \right)$$

Therefore, as $q_0>t^2/2$, we have
%$$\sum_{1\leq i\leq n}\alpha_i\leq \frac{24t^3}{t^2/2} \binom{n}{2}=\frac{24t^3n(n-1)}{t^2}<24tn^2.$$
$$\sum_{1\leq i \leq n}\abs{G_i\setminus G_i'}<\frac{4}{t^2} \left( 12t^3\binom{n}{2} \right)+2tn^2<26tn^2.$$
So, $$\sum_{1\leq i \leq n}\abs{G_i\setminus G_i'}=\sum_{1\leq i \leq n}\abs{G_i[H](A,B)\setminus G'_i[H](A,B)}<26tn^2.$$
By symmetry, using the same arguments, we have $$\sum_{1\leq i \leq n}\abs{G_i[H](B,C)\setminus G'_i[H](B,C)}<26tn^2, $$ and
$$\sum_{1\leq i \leq n}\abs{G_i[H](A,C)\setminus G'_i[H](A,C)}<26tn^2.$$
Therefore, by Remark \ref{AB,BC,CAreducedtoAB}, we have
\begin{equation}
    \label{hminush0}
    \abs{H}-\abs{H_0}<78tn^2.
\end{equation}
%By \eqref{step2edges} we have $$\sum_{1\leq i \leq n}\beta_i<2tn^2$$ Therefore total number of edges removed from all $G_i$ $1\leq i \leq n$ together is $$\sum_{1\leq i\leq n}\alpha_i+\sum_{1\leq i \leq n}\beta_i<24tn^2+2tn^2=26tn^2$$ So by \eqref{connectingHtoGi} in Remark \ref{AB,BC,CAreducedtoAB}, we have

%%%%%%%%%%%%%%%%%%%%%%%%%%%%%%%%%%%%%

\subsection{Making a $K_{1,2,q_j}$-free hypergraph $K_{1,2,q_{j+1}}$-free}

In this subsection, we fix a $j$ with $0 \le j < k$. Recall that $H_j$ is $K_{1,2,q_j}$-free, and $H_{j+1}$ is obtained by applying the $\mathcal{P}(q_{j+1})$ to $H_j$. Our goal in this subsection is to estimate $\abs{H_j} - \abs{H_{j+1}}$. The key difference between arguments in this subsection and
in the previous subsection is that now in addition to $H_j$ being $K^{(3)}_{2,t}$-free we
can also utilize the fact that $H_j$ is $K_{1,2,q_j}$-free. In particular, this extra condition leads to 
Claim \ref{improvedlemma}, which improves upon Claim \ref{sparsemultiplicity}.

%Our aim in this subsection is to upper bound the number of edges removed from the $K_{1,2,q_j}$-free hypergraph $H_j$ by the procedure $\mathcal{P}(q_{j+1})$  in order to obtain the $K_{1,2,q_{j+1}}$-free hypergraph $H_{j+1}$. 

For convenience of notation, in this subsection, let $G_i=G_i[H_{j}](A,B)$ for each $1\leq i\leq n$. 
For every $1\leq i\leq n$ and every $u,v\in A$ or $u,v\in B$ let the sets $F'_i(u,v)$ and $D'_i(u,v)$ be defined by applying the procedure $\mathcal{P}(q_{j+1})$ to the graph $G_i$, to obtain the graph $G_i'$. 

\begin{claim}
    \label{improvedlemma}
Let $u,v\in A$ or $u,v\in B$. Then $\sum_{1\leq i \leq n}\abs{F_i'(u,v)} < 2q_jt$.
\end{claim}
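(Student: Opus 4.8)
The plan is to avoid the frame-counting of Claim~\ref{sparsemultiplicity} (which necessarily produces a $\binom{t-1}{2}t \approx t^3$ term and is therefore far too lossy here) and instead exploit the extra hypothesis that $H_j$ is $K_{1,2,q_j}$-free. The starting point is the ``moreover'' part of Claim~\ref{NowhereObservation}, which gives $\abs{F'_i(u,v)} \le 2\abs{N_i(u,v)}$ for every $i$; summing over $i$, it suffices to prove $\sum_{1\le i\le n}\abs{N_i(u,v)} < (t-1)q_j$. Assume without loss of generality that $u,v\in A$ (the case $u,v\in B$ is symmetric, with the roles of $A$ and $B$ interchanged). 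Note that this reduction uses nothing about $\mathcal P(q_{j+1})$ beyond the crude bound $\abs{F'_i}\le 2\abs{N_i}$, so all the work goes into bounding a common-neighborhood sum.

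The key object is the bipartite graph $\Gamma$ with parts $B$ and $C$ in which $x\in B$ is joined to $c_i\in C$ exactly when $x\in N_i(u,v)$, i.e.\ when $\{u,x,c_i\},\{v,x,c_i\}\in H_j$. Counting edges of $\Gamma$ by their $C$-endpoint gives $\abs{E(\Gamma)} = \sum_{1\le i\le n}\abs{N_i(u,v)}$, so I must bound $\abs{E(\Gamma)}$. First I would observe that $\Gamma$ has no matching of size $t$: a matching $\{x_1,c_{i_1}\},\dots,\{x_t,c_{i_t}\}$ has distinct $x_l\in B$ and distinct $c_{i_l}\in C$, so the triples $\{u,x_l,c_{i_l}\},\{v,x_l,c_{i_l}\}$ $(1\le l\le t)$ would form a copy of $K_{2,t}^{(3)}$ with core $\{u,v\}$, contradicting that $H_j$ is $K_{2,t}^{(3)}$-free. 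Hence the matching number of $\Gamma$ is at most $t-1$, and by K\"onig's theorem $\Gamma$ has a vertex cover $W$ with $\abs{W}\le t-1$.

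Next I would bound the degrees in $\Gamma$ using the $K_{1,2,q_j}$-freeness of $H_j$, i.e.\ the fact that \emph{every} link of $H_j$ is $K_{2,q_j}$-free. For $c_i\in C$, $\deg_\Gamma(c_i)=\abs{N_i(u,v)}$ is the number of common neighbors of $u,v$ in the link $G_i[H_j](A,B)$ of $c_i$, which is $<q_j$; for $x\in B$, $\deg_\Gamma(x)$ is the number of common neighbors of $u,v$ in the link of $x$ (a bipartite graph on $A,C$), again $<q_j$. Since $W$ covers every edge of $\Gamma$,
\[
\abs{E(\Gamma)} \;\le\; \sum_{w\in W}\deg_\Gamma(w) \;<\; \abs{W}\,q_j \;\le\; (t-1)q_j .
\]
Combining this with $\abs{F'_i(u,v)}\le 2\abs{N_i(u,v)}$ yields $\sum_{1\le i\le n}\abs{F'_i(u,v)} \le 2\abs{E(\Gamma)} < 2(t-1)q_j < 2q_j t$, as required.

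The main conceptual step — and the place where the new hypothesis does all the work — is recognizing that $K_{1,2,q_j}$-freeness is exactly a uniform codegree bound ($<q_j$) across all three types of links of $H_j$, while the only relevant obstruction, a $K_{2,t}^{(3)}$ with core $\{u,v\}$, manifests as a matching in $\Gamma$, so that K\"onig's theorem converts the matching bound into the desired edge bound. The routine points to verify are that the three link types of $H_j$ really are all $K_{2,q_j}$-free (which is precisely how $H_j$ was produced from $H_{j-1}$ via $\mathcal P(q_j)$, passing to subgraphs when forming $H_j$) and that each degree computation in $\Gamma$ is read off from the correct link.
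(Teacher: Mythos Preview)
Your proof is correct and essentially identical to the paper's: the paper also reduces via $\abs{F'_i(u,v)}\le 2\abs{N_i(u,v)}$, builds the same auxiliary bipartite graph (on $B$ and $[n]$ rather than $B$ and $C$, but these are isomorphic), bounds its maximum degree by $q_j$ using $K_{1,2,q_j}$-freeness, forbids a $t$-matching via $K_{2,t}^{(3)}$-freeness, and applies K\"onig--Egerv\'ary. The only cosmetic difference is that you phrase the degree bounds as codegree bounds in the appropriate links, whereas the paper invokes the $K_{1,2,q_j}$-freeness directly.
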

\begin{proof}
For each $i\in [n]$ we denote the set of common neighbors of $u,v$ in $G_i$ as $N_i(x,y)$. 
For each $i\in [n]$,  since $H_j$ is $K_{1,2,q_j}$-free, $G_i$ is $K_{2,q_j}$-free and so $\abs{N_i(u,v)}<q_j$.

Without loss of generality let us assume $u,v\in A$. For each vertex $w$ of $B$, let $I_w = \{i\in\{1,2,\ldots , n\} \mid w\in N_i(u,v)\}$. We claim that $\abs{I_w} < q_{j}$. Indeed, for each $i\in I_w$, we have $uwc_i,vwc_i\in H_j$. So the set of hyperedges $\{uwc_i,vwc_i \mid i\in I_w\}$ form a copy of $K_{1,2,\abs{I_w}}$ in $H_j$. Thus if $\abs{I_w} \ge q_{j}$, then $H_j$ contains a copy of $K_{1,2,q_{j}}$, a contradiction. Therefore, $\abs{I_w} < q_{j}$, as desired.

Consider an auxiliary bipartite graph $G_{AUX}$ with parts $B$ and $[n]$ where the vertex $i \in [n]$ is adjacent to $b \in B$ in $G_{AUX}$ if and only if $b \in N_i(u,v)$. Then by the discussion in the previous paragraph, each vertex $w \in B$ has degree $\abs{I_w} < q_{j}$, and each vertex $i \in [n]$ has degree $\abs{N_i(u,v)} < q_j$. In other words, the maximum degree in $G_{AUX}$ is less than $q_{j}$. 

We claim that $G_{AUX}$ does not contain a matching of size $t$. Indeed, suppose for a contradiction that the edges $i_1b_{i_1}, i_2b_{i_2}, \ldots, i_tb_{i_t}$ (i.e., $b_{i_l} \in N_{i_l}(u,v)$ for $1 \le l \le t$) form a matching of size $t$ in $G_{AUX}$. Then the set of hyperedges $ub_{i_l}c_{i_l}, vb_{i_l}c_{i_l}$, $1 \le l \le t$, form a copy of $K_{2,t}^{(3)}$ in $H_j$, a contradiction, as desired.

Since $G_{AUX}$ does not contain a matching of size $t$, by the K\"onig-Egerv\'ary theorem
 it has a vertex cover of size less than $t$. This fact combined with the fact that the maximum degree of $G_{AUX}$ is less than $q_j$, implies that the number of edges of  $G_{AUX}$ is less than $q_jt$. On the other hand, the number of edges in $G_{AUX}$ is $\sum_{i\in [n]}\abs{N_i(u,v)}$. Therefore, $\sum_{i\in [n]}\abs{N_i(u,v)} < q_{j}t$. This, combined with the fact that for each $i\in [n]$, $\abs{N_i(u,v)} \geq \abs{F'_i(u,v)}/2$ (see Claim \ref{NowhereObservation}), completes the proof of the lemma.
%We claim that $\abs{I} < r$. Indeed otherwise $\abs{I}\geq q$ , So the set of these hyperedges $\{uwc_i,vwc_i|i\in I\}$ form a $K_{1,2,\abs{I}}$ which is a contradiction since $H$ is $K_{1,2,q}$-free. 
%AUXILARY GRPAPH ARGUMENT WHICH ABHISHEK WILL NOT LIKE ANYWAY AND CHANGE SO WHY TO WRITE?
%so we proved the lemma.
\end{proof}

By Lemma \ref{graphupperbound}, we have

    $$\sum_{1\leq i\leq n}\abs{G_i\setminus G'_{i}}\leq \frac{2}{q_{j+1}} \left(\sum_{u,v,\in A}\sum_{1\leq i\leq n}\abs{F'_i(u,v)}+\sum_{u,v,\in B}\sum_{1\leq i\leq n}\abs{F'_i(u,v)} \right)+2tn^2.$$
    
    Now using Claim \ref{improvedlemma}, we have
    
$$\sum_{1\leq i\leq n}\abs{G_i\setminus G'_{i}}\leq \frac{8q_{j}t}{q_{j+1}} \binom{n}{2}+2tn^2<\frac {4tq_{j}}{q_{j+1}} n^2+2tn^2.$$
Since $q_{j+1}=q_{j}/2,$ we have
$$\sum_{1\leq i\leq n}\abs{G_i\setminus G'_{i}} < 8t n^2+2tn^2=10tn^2.$$

So, $$\sum_{1\leq i \leq n}\abs{G_i\setminus G'_i}=\sum_{1\leq i \leq n}\abs{G_i[H_j](A,B)\setminus G'_i[H_j](A,B)}<10tn^2.$$
By symmetry, using the same arguments, we have $$\sum_{1\leq i \leq n}\abs{G_i[H_j](B,C)\setminus G'_i[H_j](B,C)}<10tn^2, $$ and
$$\sum_{1\leq i \leq n}\abs{G_i[H_j](A,C)\setminus G'_i[H_j](A,C)}<10tn^2.$$
Therefore, by Remark \ref{AB,BC,CAreducedtoAB}, we have

\begin{equation}
\label{hj1minushj}
  \abs{H_{j}}-\abs{H_{j+1}}<30tn^2.
\end{equation}

%%%%%%%%%%%%%%%%%%%%%%%%%%%%

\subsection{Putting it all together}
%THE LAST PART:
By \eqref{hminush0} and \eqref{hj1minushj} we have 
$$\abs{H}-\abs{H_k}=\abs{H}-\abs{H_0}+\sum_{0\leq j<k}(\abs{H_{j}}-\abs{H_{j+1}})<78tn^2+k(30tn^2).$$

By \eqref{boundingk} we have $k\leq \log{t}$, so we obtain,
\begin{equation}
\label{boundingHminusHk}
\abs{H}-\abs{H_k} < 78tn^2+30t\log{t}n^2.
\end{equation}

Notice that $H_k$ is $K_{1,2,q_k}$-free and $q_k<2t$. Therefore $H_k$ is $K_{1,2,2t}$-free.
Moreover, we know that the hypergraph $H_k$ is $3$-partite and $K_{2,t}^{(3)}$-free with parts $A, B, C$ (as it is a subhypergraph of $H$). Now we bound the size of $H_k$.

\begin{claim}
\label{boundingHk}
We have $\abs{H_k} \le 2tn^2$.
\end{claim}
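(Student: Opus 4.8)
The plan is to bound $\abs{H_k}$ through the codegrees of pairs lying inside a single part, say $A$. For two distinct vertices $a,a'\in A$, let $M(a,a')$ be the bipartite graph with parts $B$ and $C$ consisting of all pairs $bc$ with $abc\in H_k$ and $a'bc\in H_k$ (equivalently, the common edge set of the links of $a$ and $a'$). The heart of the argument is that the two forbidden configurations constrain $M(a,a')$ \emph{simultaneously}, and together they bound $\abs{M(a,a')}$ by a quantity independent of $n$. First I would use that $H_k$ is $K_{1,2,2t}$-free: if some $b\in B$ had degree at least $2t$ in $M(a,a')$, then $a,a'$ together with $b$ and $2t$ common neighbours $c_1,\dots,c_{2t}\in C$ would give the hyperedges $\{a,b,c_i\},\{a',b,c_i\}$, a copy of $K_{1,2,2t}$; the same reasoning applies to vertices of $C$. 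Hence every vertex has degree less than $2t$ in $M(a,a')$. Second I would use that $H_k$ is $K_{2,t}^{(3)}$-free: a matching of size $t$ in $M(a,a')$ with edges $x_iy_i$ ($x_i\in B$, $y_i\in C$) yields the hyperedges $\{a,x_i,y_i\},\{a',x_i,y_i\}$, i.e.\ a copy of $K_{2,t}^{(3)}$ with apexes $a,a'$; so $M(a,a')$ has no matching of size $t$.

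Next I would convert the matching bound into an edge bound. By the K\"onig--Egerv\'ary theorem, $M(a,a')$ has a vertex cover of size at most $t-1$, and since every vertex has degree less than $2t$, every edge is incident to one of at most $t-1$ vertices each of degree $<2t$, giving $\abs{M(a,a')} < 2t(t-1) < 2t^2$.

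Finally I would double count. Writing $d(b,c)=\abs{\{a\in A : abc\in H_k\}}$ for the codegree of a pair $bc\in B\times C$, on one hand $\sum_{b,c} d(b,c)=\abs{H_k}$, and on the other hand $\sum_{b,c}\binom{d(b,c)}{2}=\sum_{\{a,a'\}\subseteq A}\abs{M(a,a')} < \binom{n}{2}\cdot 2t^2 < t^2 n^2$. By convexity of $\binom{x}{2}$, setting $\bar d=\abs{H_k}/n^2$ for the average codegree over the $n^2$ pairs, one has $\sum_{b,c}\binom{d(b,c)}{2}\ge n^2\binom{\bar d}{2}=\tfrac12 n^2\,\bar d(\bar d-1)$. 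Combining the two estimates yields $\bar d(\bar d-1)<2t^2$; since $2t(2t-1)\ge 2t^2$ for $t\ge 1$, this forces $\bar d<2t$, and therefore $\abs{H_k}=\bar d\,n^2 < 2tn^2$, as desired.

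The main obstacle, and the real content of the claim, is obtaining the $n$-independent bound $\abs{M(a,a')}<2t^2$. Using either forbidden configuration in isolation only gives a K\H{o}v\'ari--S\'os--Tur\'an-type estimate of order $\sqrt{t}\,n^{5/2}$ (a $K_{2,2t}$-free link has $O(\sqrt{t}\,n^{3/2})$ edges), which is far weaker than $2tn^2$ for large $n$. The decisive point is that $K_{1,2,2t}$-freeness controls the \emph{maximum degree} of $M(a,a')$ while $K_{2,t}^{(3)}$-freeness controls its \emph{matching number}, and a bipartite graph with both small maximum degree and small matching number has only boundedly many edges; everything else is a convexity computation.
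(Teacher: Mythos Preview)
Your argument is correct and uses the same ingredients as the paper: both bound the common link $M(a,a')$ via the combination of small matching number (from $K_{2,t}^{(3)}$-freeness) and small maximum degree (from $K_{1,2,2t}$-freeness), and both combine this with a convexity/double-count on the codegrees $d(b,c)$. The only cosmetic difference is that the paper argues by contradiction---assuming $\abs{H_k}>2tn^2$, pigeonholing to a single pair $a,a'$ with $\abs{M(a,a')}>(t-1)(2t-1)$, and then invoking the dichotomy ``matching of size $t$ or vertex of degree $2t$''---whereas you bound $\abs{M(a,a')}<2t(t-1)$ uniformly via K\"onig--Egerv\'ary and then run the convexity computation directly.
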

\begin{proof}
Suppose for a contradiction that $\abs{H_k} > 2t n^2$. For any pair $\{a,b\}$ of vertices with $a \in A$ and $b \in B$, let $\codeg(a,b)$ denote the number of hyperedges of $H_k$ containing the pair $\{a,b\}$. Then the number of copies of $K_{2,1,1}$ in $H_k$ of the form $\{abc, a'bc\}$ where $a,a' \in A$, $b \in B$, $c \in C$ is $$\sum_{\substack{b,c \\ b \in B, c \in C}} \binom{\codeg(b,c)}{{2}}.$$
As the average codegree (over all the pairs $b \in B, c \in C$)  is more than $2t$, by convexity, this expression is more than $$\binom{2t}{2}n^2 > (2t-1)^2 \binom{n}{2}.$$ This means there exist a pair $a, a' \in A$ and a set of $(2t-1)^2+1 > (t-1)(2t-1)+1$ pairs $S := \{bc \mid b \in B, c \in C \}$ such that $abc, a'bc \in E(H_k)$ whenever $bc \in S$. Let $G_{AUX}$ be a bipartite graph whose edges are elements of $S$. Since $G_{AUX}$ has $\abs{S} \ge (t-1)(2t-1)+1$ edges, it either contains a matching $M$ with $t$ edges or a vertex $v$ of degree $2t$ (see Lemma A.3 in \cite{MV}
or the last paragraph of our proof of Claim \ref{improvedlemma} for a proof).
In the former case, the set of all hyperedges of the form $abc, a'bc$ with $bc \in M$, form a copy of $K^{(3)}_{2,t}$ in $H_k$, a contradiction. 
In the latter case, let $u_1, u_2, \ldots, u_{2t}$ be the neighbors of $v$ in $G_{AUX}$. Then the set of hyperedges $\{avu_i, a'vu_i \mid 1 \le i \le 2t \}$ form a copy of $K_{1,2,2t}$ in $H_k$, a contradiction again. This completes the proof of the claim.
\end{proof}

Combining \eqref{boundingHminusHk} with Claim \ref{boundingHk}, we have $\abs{H} \le 80tn^2 + 30t\log{t}n^2,$ thus proving \eqref{eq:reductionto3-partite}, which implies Theorem \ref{mainthm}, as desired.

%%%%%%%%%%%%%%%%%%%%%%%%%%%%%%%%%%%%%%%

\section{Concluding remarks}

Recall that given a bipartite graph $G$ with an ordered bipartition $(X,Y)$, where
$Y=\{y_1,\ldots, y_m\}$, $G_{X,Y}^{(r)}$ is the $r$-graph with vertex set $(X\cup Y)\cup (\bigcup_{i=1}^m Y_i)$ and edge set $\bigcup_{i=1}^m\{ e\cup Y_i: e\in E(G), y_i\in e\}$, where $Y_1,\dots,Y_m$
are disjoint $(r-2)$-sets that are disjoint from $X\cup Y$.
A standard reduction argument such as the one used in the proof of Theorem 1.4 in \cite{MV} 
can be used to show the following.
\begin{prop} \label{reduction}
Let $n,r\geq 3$ be integers and $G$ a bipartite graph with an ordered bipartition $(X,Y)$.
There exists a constant $c_r$ depending only on $r$ such that 
\[\ex(n,G^{(r)}_{X,Y})\leq c_r n^{r-3} \cdot \ex(n, G^{(3)}_{X,Y}).\]
\end{prop}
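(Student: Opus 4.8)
The plan is to prove Proposition \ref{reduction} by a standard link/projection argument that reduces an $r$-uniform problem to the $3$-uniform one. Suppose $\h$ is an $n$-vertex $r$-graph on vertex set $[n]$ with more than $c_r n^{r-3}\cdot\ex(n,G^{(3)}_{X,Y})$ edges; the goal is to locate a copy of $G^{(r)}_{X,Y}$ inside $\h$. The key structural observation is that in $G^{(r)}_{X,Y}$, every hyperedge has the form $e\cup Y_i$ where $e\in E(G)$ is a graph-edge (so $|e\cap(X\cup Y)|=2$) and $Y_i$ is a fixed $(r-2)$-set attached to the endpoint $y_i\in Y$. So the ``essential'' part of each hyperedge lives in $X\cup Y$ and has size exactly $2$, while the remaining $r-2$ vertices form a pendant set determined by the $Y$-endpoint. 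This suggests freezing an $(r-3)$-set and reading off a $3$-uniform trace.

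Concretely, first I would pass to a cleaned-up subhypergraph. For each $(r-3)$-subset $T\subseteq[n]$, define the $3$-uniform \emph{link} $\h_T=\{\,f\in\binom{[n]}{3}:f\cap T=\emptyset,\ f\cup T\in E(\h)\,\}$. By double counting, $\sum_{T}|\h_T|$ is within a constant factor (depending on $r$) of $|\h|$ times $\binom{r}{r-3}$, so by averaging there is some $T_0$ with $|\h_{T_0}|>\ex(n,G^{(3)}_{X,Y})$, provided $c_r$ is chosen large enough to absorb the binomial and counting constants. Then $\h_{T_0}$ is a $3$-graph on at most $n$ vertices exceeding the $3$-uniform Tur\'an number, so it contains a copy of $G^{(3)}_{X,Y}$. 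I would then argue that this $3$-uniform copy, together with the frozen set $T_0$, lifts back to a copy of $G^{(r)}_{X,Y}$ in $\h$: each $3$-edge of the copy has the form (graph-edge $e$) $\cup$ (one pendant vertex $z_i$), and appending $T_0$ yields the $r$-edge $e\cup Y_i$ with $Y_i:=\{z_i\}\cup T_0$. One must check these attached $(r-2)$-sets $Y_i$ are pairwise disjoint and disjoint from $X\cup Y$, which is exactly the defining property of the $G^{(3)}$-copy (its pendant vertices $z_i$ are distinct and avoid the $X,Y$ vertices) combined with $T_0$ being a common frozen set.

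The main obstacle I anticipate is precisely this disjointness/lifting step: a single common frozen set $T_0$ is \emph{shared} across all lifted hyperedges, whereas the definition of $G^{(r)}_{X,Y}$ demands \emph{pairwise disjoint} sets $Y_1,\dots,Y_m$. Sharing $T_0$ violates this if $m\ge 2$, so the naive freeze-one-set argument produces the wrong configuration. The fix is to freeze fewer vertices and match more structure in the link: rather than a single $(r-3)$-set, one should reduce dimension by $1$ at a time, or equivalently prove the bound by induction on $r$, reducing $r$ to $r-1$ at each step via a one-vertex link and keeping the per-hyperedge pendant vertices genuinely distinct. At each inductive step the overwhelming edge-count forces enough ``fresh'' vertices to be available to keep the newly attached vertices disjoint from those used before, which is a routine greedy/counting argument but is the technical heart of the proof. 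I would therefore organize the final write-up as an induction on $r$ with base case $r=3$ (trivial) and an inductive step that extracts a heavy link, applies the induction hypothesis to find $G^{(r-1)}_{X,Y}$, and then greedily assigns the extra pendant vertices; the constant $c_r$ grows by a bounded factor at each level, so $c_r$ depends only on $r$ as claimed.
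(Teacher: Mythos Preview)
The paper does not give a proof of this proposition beyond the remark that ``a standard reduction argument such as the one used in the proof of Theorem~1.4 in \cite{MV}'' applies, so there is no detailed argument here to compare against; your proposal has to stand on its own.

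You correctly identify the flaw in the naive $(r-3)$-link argument: freezing a common set $T$ makes every lifted pendant set $Y_i=\{z_i\}\cup T$ contain $T$, violating the pairwise disjointness required in the definition of $G^{(r)}_{X,Y}$. The problem is that your proposed inductive fix suffers from \emph{exactly the same defect}. If at the inductive step you pass to a single-vertex link $\h_v$, locate a copy of $G^{(r-1)}_{X,Y}$ there with pairwise disjoint $(r-3)$-sets $Y'_1,\dots,Y'_m$, and then lift, the resulting $(r-2)$-sets are $Y_i=Y'_i\cup\{v\}$; these all contain $v$ and so are again not pairwise disjoint. The phrase ``greedily assigns the extra pendant vertices'' does not rescue this: once you have committed to the link at $v$, the extra vertex in every $Y_i$ is forced to be $v$---there is nothing left to choose greedily. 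To run a genuine greedy selection you would need, for each $i$ separately, many candidate vertices $v_i$ with $\{x,y_i\}\cup Y'_i\cup\{v_i\}\in\h$ for every $x\in N_G(y_i)$, but the bare induction hypothesis (existence of one copy in one link) does not supply that per-index flexibility.

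A correct reduction must build in the disjointness from the outset rather than try to repair it after lifting---for instance by first passing to an $r$-partite subhypergraph and designating which parts furnish the pendant vertices, or by proving a supersaturation-type statement that pins down $X\cup Y$ while leaving many disjoint options for each $Y_i$. As written, your inductive scheme reproduces the obstacle you diagnosed rather than removing it.
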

Thus, by Theorem \ref{mainthm} and Proposition \ref{reduction}, for all $r\geq 4$, we have
$\ex(n, K_{2,t}^{(r)})\leq c_r t\log t \binom{n}{r-1}$ for some constant $c_r$, depending only
on $r$. On the other hand, taking the family of all $r$-element subsets of $[n]$
containing a fixed element shows that $\ex(n, K_{2,t}^{(r)})\geq \binom{n-1}{r-1}$.
Recall that in the $r=3$ case, a better lower bound of $\Omega( t\binom{n}{2})$ was shown by Mubayi and Verstra\"ete \cite{MV}. For $r=4$, we are able to improve the lower bound to $\Omega( t\binom{n}{3})$ as follows.

\begin{prop}
We have $$\ex(n, K_{2,t}^{(4)}) \ge (1+o(1))\frac{t-1}{8} n^{3}.$$
\end{prop}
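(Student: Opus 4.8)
The plan is to give an explicit construction of a $K_{2,t}^{(4)}$-free $4$-graph on $n$ vertices with roughly $\frac{t-1}{8}n^3$ edges. The natural starting point is the observation made in the concluding remarks that the ``trivial'' construction (all $r$-sets through a fixed vertex) only gives $\binom{n-1}{r-1}$, which is a factor of $t$ below the truth; so I need to build something genuinely denser, exploiting that avoiding $K_{2,t}^{(4)}$ only forbids \emph{many} pairs $a,b$ sharing a common ``link'', not a single such configuration. Recall from the definition that $K_{2,t}^{(4)}$ consists of the $2t$ edges $\{a,x_i,y_i\}\cup Y_i$ in the $3$-uniform picture, but in the $4$-uniform case each $y_i$ carries one extra padding vertex; concretely $K_{2,t}^{(4)}$ is the $4$-graph on $\{a,b\}$, pairs $\{x_i,y_i\}$, and singletons $z_i$ with edges $\{a,x_i,y_i,z_i\}$ and $\{b,x_i,y_i,z_i\}$ for $1\le i\le t$.

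The key idea I would pursue is to reduce to a structure controlled by codegrees of triples. For a $4$-graph $H$, a copy of $K_{2,t}^{(4)}$ is essentially $t$ triples $\{x_i,y_i,z_i\}$, pairwise ``almost disjoint'' in the right sense, each of which extends to a hyperedge with both $a$ and $b$. So $H$ is $K_{2,t}^{(4)}$-free if for every pair $a,b$ the family of triples $T$ with $T\cup\{a\},T\cup\{b\}\in H$ cannot contain $t$ triples that together with $a,b$ form the required disjoint pattern. The cleanest way to force this is to take a construction where the ``link'' of every pair of vertices is very restricted. I would take $H$ to be (a slight modification of) the $4$-graph obtained from a Steiner-type or partial-design packing: for instance, fix a vertex set $[n]$, and let the edges be suitable $4$-sets so that every pair $\{a,b\}$ lies in few hyperedges, or so that the triples extending to two vertices repeat at most $t-1$ times. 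The factor $\frac{t-1}{8}$ strongly suggests taking a $t$-fold structure: partition or color the hyperedges so that each pair $a,b$ sees at most $t-1$ common triples in a controlled way, which rules out the $t$ disjoint triples needed for a copy.

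Concretely, I expect the construction to go as follows. First I would take an extremal $K_{2,2}$-type or packing object on $n$ vertices that is dense at the level of triples, then ``blow up'' or ``layer'' it $t-1$ times, and argue that the resulting $4$-graph has $(1+o(1))\frac{t-1}{8}n^3$ edges. The counting of edges should be the routine part: one counts $4$-sets, the factor $\frac{1}{8}$ coming from overcounting (each hyperedge counted via an ordered triple of its vertices, or via $\binom{4}{3}/\text{something}$) together with the $t-1$ layers. Then I would verify $K_{2,t}^{(4)}$-freeness: suppose a copy existed, giving $t$ triples each extending to both $a$ and $b$; I would show that the construction's defining property (few repetitions per pair, or a design constraint forcing any two such triples to intersect) prevents finding $t$ of them with the required disjointness, so at most $t-1$ are available, a contradiction.

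The main obstacle, and the step I would spend the most care on, is the verification of $K_{2,t}^{(4)}$-freeness while simultaneously keeping the edge count at the full $(1+o(1))\frac{t-1}{8}n^3$. These two requirements pull against each other: making the pair-links very sparse kills edges, while packing in edges tends to create the forbidden $t$-fold disjoint pattern. Getting the leading constant exactly $\frac{t-1}{8}$ (rather than something smaller) will require choosing the layering so that the $t-1$ copies are nearly edge-disjoint and asymptotically lossless, and then showing that any potential copy of $K_{2,t}^{(4)}$ must draw its $t$ triples from distinct layers or collides within a layer, bounding the count by $t-1$. I would handle this by an averaging or greedy argument identical in spirit to the matching/vertex-cover dichotomy used in Claim \ref{improvedlemma} and Claim \ref{boundingHk} of the main proof: either one finds a large matching among the candidate triples (yielding the forbidden disjoint copy, contradiction) or a high-multiplicity vertex, the latter being excluded by the layered design.
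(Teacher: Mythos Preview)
Your proposal does not contain a proof: you never specify a construction, only speculate about its possible shape (``Steiner-type or partial-design packing'', ``layer it $t-1$ times'', ``pair-links very sparse''). None of these guesses is what the paper actually does, and in particular the reading of $\frac{t-1}{8}$ as ``$t-1$ layers times $\binom{4}{3}/\text{something}$'' is off the mark.

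The paper's construction is short and of a different flavor. Start from a nearly-regular $K_{2,t}$-free \emph{graph} $G$ on $n$ vertices with $(1+o(1))\frac{\sqrt{t-1}}{2}n^{3/2}$ edges (F\"uredi's construction), and let the $4$-graph $H$ consist of all $4$-sets that induce a $2$-matching in $G$: $\{a,b,c,d\}\in H$ iff $ab,cd\in G$ and $ac,ad,bc,bd\notin G$. The edge count $(1+o(1))\frac{t-1}{8}n^3$ comes simply from $\binom{|E(G)|}{2}$ minus lower-order corrections for pairs of edges that intersect or are not induced. The $K_{2,t}^{(4)}$-freeness is then reduced back to the $K_{2,t}$-freeness of $G$ via the following observation: if $\{a,x,y,z\}$ and $\{b,x,y,z\}$ are both induced $2$-matchings, then some vertex $c\in\{x,y,z\}$ satisfies $ac,bc\in G$. (Indeed, $a$ is $G$-adjacent to exactly one of $x,y,z$; the other two form a $G$-edge; the same holds for $b$, and the two options for $b$'s partner among $\{x,y,z\}$ are forced to coincide with $a$'s, else the other two would not form an edge.) A putative $K_{2,t}^{(4)}$ in $H$ with apex pair $\{a,b\}$ therefore produces $t$ distinct common $G$-neighbors of $a$ and $b$, i.e.\ a $K_{2,t}$ in $G$, a contradiction.

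The idea you are missing is exactly this reduction to a graph problem: rather than layering or controlling triple-codegrees directly, one encodes a $K_{2,t}$-free graph into a $4$-graph so that any two hyperedges sharing a triple force a common neighbor in the base graph. Your matching/vertex-cover dichotomy from Claims~\ref{improvedlemma} and~\ref{boundingHk} plays no role here.
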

\begin{proof} (Sketch.)
Consider a $K_{2,t}$-free graph $G$ with $(1+o(1))\frac{\sqrt{t-1}}{2} n^{3/2}$ edges where each vertex has degree  $(1+o(1))\sqrt{(t-1)} \sqrt n$. (Such a graph exists by a construction of F\"uredi \cite{Furedi}.) Let us a define a $4$-graph $H=\{abcd\mid ab,cd \in G$ and $ac,ad,bc,bd \notin G\}$. In other words, let the edges of $H$ be the vertex sets of induced $2$-matchings in $G$.
Via standard counting, it is easy to show that
%First let us estimate the size of $H$. Number of pairs of edges of $G$ is $\binom{|G|}{2} = (1+o(1))%\frac{t-1}{8} n^{3}$. If two distinct edges $pq, rs \in G$ do not form an induced matching in $G$ %then either $pq$ and $rs$ have a common vertex or they are disjoint and there is an edge %connecting $\{p,q\}$ and $\{r,s\}$, in which case, there is a path of length $3$ whose first edge is %%$pq$ and last edge is $rs$. There are $(1+o(1))n(\sqrt{(t-1)} \sqrt n)^2=(1+o(1))(t-1)n^2$ pairs of %edges that have a common vertex and at most $(1+o(1))n(\sqrt{(t-1)} \sqrt n)^3=(1+o(1))(t-1)^{3/2}%n^{5/2}$ paths of length $3$ in $G$. So the number of induced $2$-matchings in $G$ is %at least $$(1+o(1)) \left( \frac{t-1}{8} n^{3}-(t-1)n^2-(t-1)^{3/2}n^{5/2}\right)= (1+o(1))\frac{t-1}{8} %n^{3}.$$ 
$\abs{H} = (1+o(1))\frac{t-1}{8} n^{3}$. It remains to show $H$ is $K_{2,t}^{(4)}$-free.
\begin{claim}
\label{onlyclaim}
If $axyz,bxyz \in H$, then there is a vertex $c \in \{x,y,z\}$ such that $ac, bc\in G$.
\end{claim}

\begin{proof} By our assumption, $\{a,x,y,z\}$ and $\{b,x,y,z\}$ both induce a $2$-matching in $G$.  
Without loss of generality, suppose $ax,yz\in G$. If $bx\in G$ then we are done. Otherwise, we have
$by,xz\in G$ or $bz,xy\in G$, both contradicting $\{ax,yz\}$ being
an induced matching in $G$.
\end{proof}

Suppose for contradiction that $H$ has a copy of $K_{2,t}^{(4)}$  with 
%vertex set $\{a,b\} \cup \{x_1,y_1,z_1,x_2,y_2,z_2,\ldots, x_t,y_t,z_t\}$, and 
edge set $\{ ax_iy_iz_i,bx_iy_iz_i \mid 1\leq i \leq t\}$. By Claim \ref{onlyclaim}, for each $1\leq i\leq t$, there exists a vertex $w_i\in\{x_i,y_i,z_i\}$ such that $aw_i,bw_i\in G$. This yields 
 a copy of $K_{2,t}$ in $G$, a contradiction.
\end{proof}

For $r\geq 5$, we do not yet have a lower bound that is asymptotically larger than $\binom{n-1}{r-1}$. It would be interesting to narrow the gap between the lower and upper bounds on
$\ex(n,K_{2,t}^{(r)})$.

\vspace{2mm}

It will be interesting to have a systematic study of the function $\ex(n,G_{X,Y}^{(r)})$.
Mubayi and Verstra\"ete \cite{MV} showed that $\ex(n,K_{s,t}^{(3)})=O(n^{3-1/s})$ and that
if $t>(s-1)!>0$ then $\ex(n, K_{s,t}^{(3)})=\Omega(n^{3-2/s})$ and speculated that
$n^{3-2/s}$ is the correct order of magnitude.
The case when $G$ is a tree is studied in \cite{FJKMV}, where the problem considered there
is slightly  more general.  The case when $G$ is an even cycle has also been studied.
Let $C_{2t}^{(r)}$ denote $G_{X,Y}^{(r)}$ where $G$ is the even cycle $C_{2t}$
of length $2t$. It was shown by Jiang and Liu \cite{JL} that 
$c_1 t \binom{n}{r-1}\leq \ex(n,C_{2m}^{(r)})\leq c_2 t^5 \binom{n}{r-1}$, for some positive constants
depending $c_1,c_2$ on $r$. Using results in this paper and new ideas,
we are able to narrow the gap to
$c_1 t\binom{n}{r-1} \leq \ex(n, C_{2m}^{(r)})\leq c_2 t^2\log t \binom{n}{r-1}$, for some
positive constants $c_1,c_2$ depending on $r$. We would like to postpone this and other results on the topic  for a future paper.

Finally, motivated by results on $K_{2,t}^{(r)}$ and $C_{2t}^{(r)}$, we pose the following question.

\begin{question}
Let $r\geq 3$. Let $\mathcal G$ be the family of bipartite graphs $G$ with an ordered bipartition
$(X,Y)$ in which every vertex in $Y$ has degree at most $2$ in $G$. Is it true that 
$\forall G\in \mathcal G$ there is a constant $c$ depending  on $G$ such that
$\ex(n,G_{X,Y}^{(r)})\leq c\binom{n}{r-1}$?
\end{question}

%%%%%%%%%%%%%%%%%%%%%%%%%%%%%%%%%%%%%%%%

\section*{Acknowledgments}

The research of the first and third authors was supported by the Doctoral Research Support Grant of CEU, and by the Hungarian National Research, Development and Innovation Office NKFIH, grant K116769. The first and third authors are especially grateful for the generous hospitality of Miami University.

\end{document}